\theoremstyle{plain}%
 \newtheorem{theorem}{Theorem}
 \newtheorem{lemma}{Lemma}%
\theoremstyle{remark}
\theoremstyle{definition}
\newtheorem{definition}{Definition}
\newtheorem{example}{Example}
\begin{document}

\begin{center}
{\large A Schur--Weyl duality analogue based on a commutative bilinear operation}

 \ 

{\sc John M. Campbell}
\end{center}

\begin{abstract}
 Schur--Weyl duality concerns the actions of $\text{GL}_{n}(\mathbb{C})$ and $S_{k}$ on tensor powers of the form $V^{\otimes k}$ for an 
 $n$-dimensional vector space $V$. There are rich histories within representation theory, combinatorics, and statistical mechanics involving the study and 
 use of diagram algebras, which arise through the restriction of the action of $\text{GL}_{n}(\mathbb{C})$ to subgroups of $\text{GL}_{n}(\mathbb{C})$. 
 This leads us to consider further variants of Schur--Weyl duality, with the use of variants of the tensor space $V^{\otimes k}$. Instead of taking repeated 
 tensor products of $V$, we make use of a freest commutative bilinear operation in place of $\otimes$, and this is motivated by an associated 
 invariance property given by the action of $S_{k}$. By then taking the centralizer algebra with respect to the action of the group 
 of permutation matrices in $\text{GL}_{n}(\mathbb{C})$, this gives rise to a diagram-like algebra spanned by a new class of combinatorial objects. We 
 construct orbit-type bases for the centralizer algebras introduced in this paper, and we apply these bases to prove a combinatorial formula for the 
 dimensions of our centralizer algebras. 
\end{abstract}

\noindent {\footnotesize \emph{MSC:} 05E10, 16S99}

\noindent {\footnotesize \emph{Keywords:} Schur--Weyl duality, partition algebra, diagram algbera, partition diagram}

\section{Introduction}\label{sectionIntro}
 Let $V \cong \mathbb{C}^{n}$ denote an $n$-dimensional complex vector space. We then let the symmetric group $S_{k}$ act on $V^{\otimes k}$ 
 according to the \emph{place-permutation} action such that 
\begin{equation}\label{placeaction}
 \sigma \left( \text{{\bf w}}_{i_{1}} \otimes \text{{\bf w}}_{i_{2}} \otimes \cdots \otimes \text{{\bf w}}_{i_{k}} \right) 
 = \text{{\bf w}}_{\sigma^{-1}(i_{1})} \otimes \text{{\bf w}}_{\sigma^{-1}(i_{2})} \otimes \cdots \otimes 
 \text{{\bf w}}_{\sigma^{-1}(i_{k})} 
\end{equation}
 for a permutation $\sigma \in S_{k}$ and for elements $\text{{\bf w}}_{i_{1}}$, $\text{{\bf w}}_{i_{2}}$, $\ldots$, $\text{{\bf w}}_{i_{k}}$ in $V$, 
 for distinct indices $i_{1}, i_{2}, \ldots, i_{k} \in \{ 1, 2, \ldots, k \}$. For a given basis $\{ \text{{\bf v}}_{1}, \text{{\bf v}}_{2}, \ldots, 
 \text{{\bf v}}_{n} \}$ of $V$, we let the general linear group $\text{GL}_{n}(\mathbb{C})$ act on $V$ so that
\begin{equation}\label{20240602728PM1A}
 g \text{{\bf v}}_{i} = \sum_{j = 1}^{n} g_{j, i} \text{{\bf v}}_{j}, 
\end{equation}
 with this action being extended diagonally so that 
\begin{equation}\label{gdiag}
 g\left( \text{{\bf v}}_{i_{1}} \otimes \text{{\bf v}}_{i_{2}} \otimes \cdots \otimes \text{{\bf v}}_{i_{k}} \right) 
 = g \text{{\bf v}}_{i_{1}} \otimes g \text{{\bf v}}_{i_{2}} \otimes \cdots \otimes g \text{{\bf v}}_{i_{k}}. 
\end{equation}
 For a vector space $W$, we let $\text{End}(W)$ denote the the algebra of linear endomorphisms on $W$, and, for a group or group algebra $A$ acting 
 on $W$, we let $\text{End}_{A}(W)$ denote the subalgebra of $\text{End}(W)$ consisting of morphisms that are centralized with respect to the action 
 of $A$. Extending the group actions indicated in \eqref{placeaction} and \eqref{gdiag}, respectively, to actions by the group algebras $\mathbb{C} 
 S_k$ and $\mathbb{C} \text{GL}_{n}(\mathbb{C})$, \emph{Schur--Weyl duality} (see \cite{Schur1927}
 and \cite{Weyl1939}) may be understood to refer to these group algebra actions 
 commuting and to how there exist surjective homomorphisms from $\mathbb{C} \text{GL}_{n}(\mathbb{C})$ to $\text{End}_{\mathbb{C}S_k}\left( 
 V^{\otimes k} \right)$ and from $\mathbb{C}S_{k}$ to $\text{End}_{\mathbb{C} \text{GL}_{n}(\mathbb{C}) }\left( V^{\otimes k} \right)$
 (given, respectively, by the natural representations of 
 $\mathbb{C} \text{GL}_{n}(\mathbb{C})$ and of $\mathbb{C}S_{k}$), 
 with injectivity for $n \geq k$; see the work 
 of Cruz \cite{Cruz2019} and of Dipper et al.\ \cite{DipperDotyHu2008}, for example, 
 and references therein. Due to how each action generates the centralizer algebra of the other action, we obtain the decomposition 
\begin{equation}\label{SchurWeyldecomposition}
 V^{\otimes k} \cong \bigoplus_{\lambda} L^{\lambda}_{\text{GL}_{n}(\mathbb{C})} \otimes S_{k}^{\lambda} 
\end{equation}
 of $V^{\otimes k}$ as a bimodule, under the specified actions, where $L^{\lambda}_{\text{GL}_{n}(\mathbb{C})}$ denotes the irreducible $\mathbb{C} 
 \text{GL}_{n}(\mathbb{C})$-module corresponding to an integer partition $\lambda$, and where $S_{k}^{\lambda}$ denotes the Specht module 
 corresponding to the same partition $\lambda$, with reference to the work of Halverson and Ram
 \cite{HalversonRam2005} on partition algebras and Schur--Weyl duality. 
 Since the decomposition in \eqref{SchurWeyldecomposition} is multiplicity-free, and since the same index set of partitions is used for the irreducible 
 modules for both the general linear and symmetric groups, this provides a fundamental connection within the field of representation theory and within related 
 areas of combinatorics: Informally, this is due to how the formulation of Schur--Weyl duality in \eqref{SchurWeyldecomposition} 
 gives us how the 
 representations of $\text{GL}_{n}(\mathbb{C})$ provide information on the representations of $S_{k}$ and vice-versa. The representation theory of 
 the symmetric group may be seen as being of basic importance within the representation theory of groups and within many associated disciplines of 
 combinatorics, with reference to the classic text by Sagan \cite{Sagan2001}, and hence the significance as to how the decomposition in 
 \eqref{SchurWeyldecomposition} relates the representations of $S_{k}$ with matrix group representations. 

 The foregoing considerations motivate the generalization of Schur--Weyl duality with the use of variants of actions as in the general linear group action 
 defined via \eqref{gdiag}. In this regard, the notion of a \emph{diagram algebra} naturally arises both within the field of statistical mechanics and through 
 the restriction of the $\text{GL}_{n}(\mathbb{C})$-action in \eqref{gdiag}, by restricting $g$ to a specified matrix subgroup containing the permutation 
 matrices in $\text{GL}_{n}(\mathbb{C})$. In particular, by rewriting $\text{End}_{\mathbb{C}\text{GL}_{n}(\mathbb{C})}\left( V^{\otimes k} \right)$ 
 as $\text{End}_{\text{GL}_{n}(\mathbb{C})}\left( V^{\otimes k} \right)$, and by identifying $S_{n}$ with the group of permutation matrices in 
 $\text{GL}_{n}(\mathbb{C})$, we restrict the action of $\text{GL}_{n}(\mathbb{C})$ to $S_{n}$, so as to obtain the isomorphic equivalence 
\begin{equation}\label{EndSnVtensor}
 \text{End}_{S_{n}}\left( V^{\otimes k} \right) \cong \mathbb{C}A_k(n) 
\end{equation}
 for $2k \leq n$, where the right-hand side of \eqref{EndSnVtensor} is the \emph{partition algebra} of order $k$, as defined by Halverson and Ram 
 \cite{HalversonRam2005} with the use of combinatorial objects known as \emph{partition diagrams}. We instead consider an analogue of 
 \eqref{EndSnVtensor} with the use of a freest \emph{commutative} bilinear operation in place of $\otimes$. 

\section{A Schur--Weyl duality analogue based on a commutative bilinear operation}
 For the purposes of this paper, we let vector spaces be over $\mathbb{C}$. For two such spaces $V$ and $W$, we recall the construction of $V \otimes 
 W$ as a quotient space and consider a commutative variant of this construction. 
 Letting $L$ denote the Cartesian product $V \times W$, 
 we then let $R_{1}$ denote the subspace of $L$ spanned 
 by expressions of the forms $(\text{{\bf v}}_{1} + \text{{\bf v}}_{2}, \text{{\bf w}}) - (\text{{\bf v}}_{1}, \text{{\bf w}}) - (\text{{\bf v}}_{2}, \text{{\bf w}})$ and $(\text{{\bf v}}, \text{{\bf w}}_{1} + \text{{\bf w}}_{2}) - (\text{{\bf v}}, \text{{\bf w}}_{1}) - (\text{{\bf v}}, \text{{\bf w}}_{2})$ and $(s \text{{\bf v}}, \text{{\bf w}}) - s(\text{{\bf v}}, \text{{\bf w}})$ and $(\text{{\bf v}}, s \text{{\bf w}}) - s (\text{{\bf v}}, \text{{\bf w}})$, for $\text{{\bf v}}, \text{{\bf v}}_{1}, \text{{\bf v}}_{2} \in V$ and $\text{{\bf w}}, \text{{\bf w}}_{1}, \text{{\bf w}}_{2} \in W$
 and $s \in \mathbb{C}$. The tensor space $V \otimes W$ may then be identified
 with the quotient space $L/R_{1}$, and the equivalence class containing the tuple $(\text{{\bf v}}, \text{{\bf w}})$
 may be written as $\text{{\bf v}} \otimes \text{{\bf w}}$. 
 We then set $R_{2}$ as the subspace of $L$ spanned by the union of $R_{1}$
 and the set of all 
 expressions of the form $(\text{{\bf v}}, \text{{\bf w}}) - (\text{{\bf w}}, \text{{\bf v}})$, 
 again for $\text{{\bf v}} \in V$ and for $\text{{\bf w}} \in W$. 
 We then rewrite the quotient space $L/R_{2}$ as 
 $ V \boxtimes W$, 
 and we rewrite the equivalence class containing $(\text{{\bf v}}, \text{{\bf w}})$ as 
 $\text{{\bf v}} \boxtimes \text{{\bf w}}$. 
 The operation $\boxtimes$ may be seen as a freest commutative bilinear operation, by analogy 
 with the universal property that may be used to define $\otimes$. 

 Again setting $V \cong \mathbb{C}^{n}$, we write $V^{\boxtimes k}$ in place of the $k$-fold $\boxtimes$-product 
\begin{equation}\label{boxpower}
 V^{\boxtimes k} = \underbrace{V \boxtimes V \boxtimes \cdots \boxtimes V}_{k}. 
\end{equation}
 By analogy with \eqref{placeaction}, we let $S_{k}$
 act on $ V^{\boxtimes k}$ by setting 
\begin{equation}\label{Skactbox}
 \sigma \left( \text{{\bf w}}_{i_{1}} \boxtimes 
 \text{{\bf w}}_{i_{2}} \boxtimes \cdots \boxtimes \text{{\bf w}}_{i_{k}} \right) 
 = \text{{\bf w}}_{\sigma^{-1}(i_{1})} \boxtimes 
 \text{{\bf w}}_{\sigma^{-1}(i_{2})} \boxtimes \cdots \boxtimes 
 \text{{\bf w}}_{\sigma^{-1}(i_{k})} 
\end{equation}
 for $\sigma \in S_{k}$ and for $\text{{\bf w}}_{i_{1}}$, $\text{{\bf w}}_{i_{2}}$, $\ldots$, $\text{{\bf w}}_{i_{k}}$ as above. The action defined in 
 \eqref{Skactbox} is inspired by an invariance property that is 
 described below and that is related to the classical formulation of Schur--Weyl duality 
 reviewed in Section \ref{sectionIntro}. 
 By again fixing $\{ \text{{\bf v}}_{1}, \text{{\bf v}}_{2}, \ldots, \text{{\bf v}}_{n} \}$ 
 as a basis of $V$, we find that 
 the set of all expressions of the form 
 $ \text{{\bf v}}_{i_{1}} \boxtimes \text{{\bf v}}_{i_{2}} \boxtimes 
 \cdots \boxtimes \text{{\bf v}}_{i_{k}}$, up to equivalence 
 by permutations of the positions of the factors in such $ \boxtimes$-products, 
 and for indices $i_{1}$, $i_{2}$, $\ldots$, $i_{k}$ that are not necessarily distinct, 
 forms a basis of \eqref{boxpower}. 
 This gives us that the dimension of $V^{\boxtimes k}$ 
 is equal to the number of integer partitions of length $k$
 with entries from $\{ 1, 2, \ldots, n \}$. 

 With respect to the action defined in \eqref{Skactbox}, 
 we obtain the centralizer algebra 
\begin{equation}\label{20100000002470758282717075777A7KM1A}
 \text{End}_{S_{k}}\left( V^{\boxtimes k} \right) = 
 \left\{ f \in \text{End}\left( V^{\boxtimes k} \right) : 
 \forall \sigma \in S_{k} \forall t \in V^{\boxtimes k} \ 
 f \sigma t = \sigma f t \right\}. 
\end{equation}
 For a given basis element $ \text{{\bf b}} = \text{{\bf v}}_{i_{1}} \boxtimes \text{{\bf v}}_{i_{2}} \boxtimes \cdots \boxtimes \text{{\bf v}}_{i_{k}}$ 
 of $V^{\boxtimes k}$, we have that $\sigma \text{{\bf b}} = \text{{\bf b}}$ for all permutations $\sigma \in S_{k}$. So, for any endomorphism $f\colon 
 V^{\boxtimes k} \to V^{\boxtimes k}$ by letting $ f(\text{{\bf c}}_{i}) = \sum_{j \in I} c_{i, j} \text{{\bf c}}_{j} $ for an index set $I$ and scalars 
 $c_{i}$ and a basis $\{ \text{{\bf c}}_{i} \}_{i \in I}$ of $V^{\boxtimes k}$, we find that $ \sigma f(\text{{\bf c}}_{i}) = \sum_{j \in I} c_{i, j} \left( 
 \sigma \text{{\bf c}}_{j} \right) = f(\text{{\bf c}}_{i}) $ and that $ f\left( \sigma \, \text{{\bf c}}_{i} \right) = f\left( \text{{\bf c}}_{i} \right)$, 
 which gives us the invariance property 
\begin{equation}\label{invariance}
 \text{End}_{S_{k}}\left( V^{\boxtimes k} \right) = \text{End}\left( V^{\boxtimes k} \right). 
\end{equation}

 By analogy with \eqref{gdiag}, we set $ g\left( \text{{\bf v}}_{i_{1}} \boxtimes \text{{\bf v}}_{i_{2}} \boxtimes \cdots \boxtimes 
 \text{{\bf v}}_{i_{k}} \right) = g \text{{\bf v}}_{i_{1}} \boxtimes g \text{{\bf v}}_{i_{2}} \boxtimes \cdots \boxtimes g \text{{\bf v}}_{i_{k}}$, 
 again for $g \in \text{GL}_{n}(\mathbb{C})$. For a permutation $\sigma$ of order $n$, we write $\sigma\colon \{ 1, 2, \ldots, n \} \to \{ 1, 2, \ldots, n \}$ 
 as a bijection, 
 and we identify $\sigma$ with the permutation matrix $g \in \text{GL}_{n}(\mathbb{C})$ 
 such that the $(\sigma(i), i)$-entry of $g$ equals $1$, 
 with $0$-entries elsewhere. So, the permutation $\sigma$ 
 acts on $V$ so that $\sigma \text{{\bf v}}_{i} = \sum_{i=1}^{n} g_{\sigma(i), i} \text{{\bf v}}_{\sigma(i)}$ 
 according to the notation in \eqref{20240602728PM1A}. 
 This action is extended so that 
\begin{equation}\label{extendpermutation}
 \sigma\left( \text{{\bf v}}_{i_{1}} \boxtimes \text{{\bf v}}_{i_{2}} \boxtimes \cdots \boxtimes \text{{\bf v}}_{i_{k}} \right) 
 = \text{{\bf v}}_{\sigma(i_{1})} \boxtimes \text{{\bf v}}_{\sigma(i_{2})} 
 \boxtimes \cdots \boxtimes \text{{\bf v}}_{\sigma(i_{k})}, 
\end{equation}
 with \eqref{extendpermutation} to 
 be extended linearly. 
 Observe how the action in \eqref{extendpermutation} 
 of $S_{n}$, as a subgroup of $\text{GL}_{n}(\mathbb{C})$, on $V^{\boxtimes k}$, 
 is inequivalent to the action of $S_{k}$ on $V^{\boxtimes k}$ indicated in 
 \eqref{Skactbox}. This 
 and the invariance property indicated in \eqref{invariance}
 lead us to investigate the centralizer algebra
 $\text{End}_{S_{n}}\left( V^{\boxtimes k} \right)$, 
 as opposed to $\text{End}_{S_{k}}\left( V^{\boxtimes k} \right)$. 

 The key to the determination of a combinatorial interpretation 
 for basis elements of either side of the isomorphic equivalence in 
 \eqref{EndSnVtensor}
 is given by a 1994 result attributed, as in the work of Bloss \cite{Bloss2005}, 
 to the work of Jones \cite{Jones1994} 
 on the Potts model and the symmetric group. 
 Our derivation, as below, of an analogue of Jones' characterization of elements of the centralizer algebra in 
 \eqref{EndSnVtensor} follows a similar approach as in 
 the presentation by Bloss \cite{Bloss2005} on Jones' characterization. 

 An \emph{integer partition} is a finite tuple of non-decreasing positive integers. For an integer partition $\lambda$, we let the length of the tuple $ 
 \lambda$ be denoted as $\ell(\lambda)$, and we let the entries of $\lambda$ be denoted in such a way so that $\lambda = (\lambda_{1}, 
 \lambda_{2}, \ldots, \lambda_{\ell(\lambda)})$. We let $\lambda^{\text{T}}$ denote the transpose of $\lambda$, which is equal to the integer 
 partition obtained by reflecting the Ferrers diagram of $\lambda$ about the main diagonal, with reference to 
 standard texts on integer partitions, as in the work of Andrews 
 \cite{Andrews1998}. We let $\mathcal{P}$ denote the set of all integer partitions. For a tuple $t$ of positive integers, we let $\text{sort}(t)$ 
 denote the integer partition obtained by sorting the entries of $t$ in nondecreasing order. For an integer partition $\lambda$, it is standard to let 
 $m_{j}(\lambda)$ denote the multiplicity of $j$ in $\lambda$. 

 For an integer partition $\lambda$ such  
 that $\ell(\lambda) = k$ and  such that each entry of $\lambda$ is less than or equal to $n$, 
 we set
 $ \text{{\bf v}}_{\lambda} 
 = \text{{\bf v}}_{\lambda_{1}} \boxtimes \text{{\bf v}}_{\lambda_{2}} 
 \boxtimes \cdots \boxtimes \text{{\bf v}}_{\lambda_k}$, 
 again for the specified basis of the $n$-dimensional space $V$. 
 Our definition for $\boxtimes$ is such that the family 
\begin{equation}\label{basisboxpower}
 \left\{ \text{{\bf v}}_{\lambda} : \ell(\lambda) = k, \ell\left(\lambda^{\text{T}}\right) \leq n \right\} 
\end{equation}
 is a basis for $V^{\boxtimes k}$. 
 This can be thought of as providing an analogue of how bases of the Hopf algebra $\textsf{Sym}$ of symmetric functions 
 are indexed by integer partitions, referring to the text of Macdonald \cite{Macdonald1995} for details. 
 If we consider the Hopf algebra $\textsf{NSym}$ of noncommutative symmetric functions 
 introduced by Gelfand et al.\ \cite{GelfandKrobLascouxLeclercRetakhThibon1995} 
 and how it projects onto $\textsf{Sym}$, 
 this motivates our exploration of 
 the relationship between 
 $\text{End}_{S_{n}}\big( V^{\otimes k} \big)$ 
 and a commutative analogue based on $V^{\boxtimes k}$. 

 Let $\lambda$ be an integer partition satisfying the conditions in 
 \eqref{basisboxpower}. 
 We then let $B \in \text{End}\left( V^{\boxtimes k} \right)$, and we write 
\begin{equation}\label{writeBvlambda}
 B\left( \text{{\bf v}}_{\lambda} \right) = 
 \sum_{\substack{ \mu \in \mathcal{P} \\ \ell(\mu) = k \\ \ell\left( \mu^{\text{T}} \right) \leq n }} 
 B^{\lambda}_{\mu} \text{{\bf v}}_{\mu}. 
\end{equation}

\begin{lemma}\label{analogueJones}
 Let $B \in \text{End}\left( V^{\boxtimes k} \right)$.
 Then $B \in \text{End}_{S_{n}}\left( V^{\boxtimes k} \right)$ if and only if 
\begin{equation}\label{29999990929490959390929194PM1A}
 B_{\mu}^{\lambda} = 
 B^{\text{sort}\left( \sigma\left( \lambda_{1} \right), \sigma\left( \lambda_{2} \right), \ldots, \sigma\left( \lambda_{k} 
 \right) \right)}_{ \text{sort}\left( 
 \sigma\left( \mu_{1} \right), 
 \sigma\left( \mu_{2} \right), \ldots, 
 \sigma\left( \mu_{k} \right) \right) } 
\end{equation}
 for all integer partitions $\lambda$ and $\mu$
 such that $\ell(\lambda) = \ell(\mu) = k$
 and $\ell\left( \lambda^{\text{T}} \right) \leq n$
 and $\ell\left( \mu^{\text{T}} \right) \leq n$
 and for all permutations $\sigma \in S_{n}$. 
\end{lemma}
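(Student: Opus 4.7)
The plan is to prove both implications by direct calculation on the basis \eqref{basisboxpower}, after first making explicit how a permutation matrix $\sigma \in S_{n}$ acts on a basis vector $\text{{\bf v}}_{\lambda}$. The key preliminary observation is that, by the commutativity of $\boxtimes$ and the definition of the basis in \eqref{basisboxpower}, the $\boxtimes$-product $\text{{\bf v}}_{\sigma(\lambda_{1})} \boxtimes \cdots \boxtimes \text{{\bf v}}_{\sigma(\lambda_{k})}$ depends only on the multiset $\{\sigma(\lambda_{1}),\ldots,\sigma(\lambda_{k})\}$, and its canonical representative in the basis is $\text{{\bf v}}_{\text{sort}(\sigma(\lambda_{1}),\ldots,\sigma(\lambda_{k}))}$. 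Thus \eqref{extendpermutation} can be rewritten as $\sigma \, \text{{\bf v}}_{\lambda} = \text{{\bf v}}_{\text{sort}(\sigma(\lambda))}$, and the map $\lambda \mapsto \text{sort}(\sigma(\lambda))$ is a bijection on the set of partitions $\lambda$ with $\ell(\lambda) = k$ and $\ell(\lambda^{\text{T}}) \leq n$, with inverse $\nu \mapsto \text{sort}(\sigma^{-1}(\nu))$.

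For the forward direction, I would assume $\sigma B = B \sigma$ and apply both sides to an arbitrary basis vector $\text{{\bf v}}_{\lambda}$. Using \eqref{writeBvlambda} together with the preceding observation, the right-hand side expands as $\sigma B(\text{{\bf v}}_{\lambda}) = \sum_{\mu} B^{\lambda}_{\mu} \, \text{{\bf v}}_{\text{sort}(\sigma(\mu))}$, while the left-hand side expands as $B \sigma(\text{{\bf v}}_{\lambda}) = B\bigl(\text{{\bf v}}_{\text{sort}(\sigma(\lambda))}\bigr) = \sum_{\nu} B^{\text{sort}(\sigma(\lambda))}_{\nu} \, \text{{\bf v}}_{\nu}$. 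Reindexing the first sum along the bijection $\mu \mapsto \nu = \text{sort}(\sigma(\mu))$ and matching coefficients of each basis vector $\text{{\bf v}}_{\nu}$ yields exactly the identity \eqref{29999990929490959390929194PM1A}.

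For the reverse direction, I would run the same computation backward: assuming \eqref{29999990929490959390929194PM1A} holds for every $\sigma$, $\lambda$, and $\mu$, substituting into the two expansions above shows $\sigma B(\text{{\bf v}}_{\lambda}) = B \sigma(\text{{\bf v}}_{\lambda})$ on each basis element, and linearity then gives $\sigma B = B \sigma$ on all of $V^{\boxtimes k}$, so $B \in \text{End}_{S_{n}}(V^{\boxtimes k})$. The main obstacle, such as it is, is the bookkeeping around the reindexing $\mu \mapsto \text{sort}(\sigma(\mu))$: one must verify that this map preserves the indexing set of \eqref{basisboxpower}, which is immediate since $\sigma$ is a permutation of $\{1,\ldots,n\}$ and therefore cannot introduce entries outside that range, nor change the length $k$ of the tuple. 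With this verification in hand, the entire argument is coefficient-matching on the basis \eqref{basisboxpower}, in close parallel with the presentation in \cite{Bloss2005} of Jones' characterization for the partition algebra case.
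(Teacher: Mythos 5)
Your proposal is correct and follows essentially the same route as the paper: expand $\sigma B(\text{{\bf v}}_{\lambda})$ and $B(\sigma\text{{\bf v}}_{\lambda})$ in the basis \eqref{basisboxpower} using $\sigma\text{{\bf v}}_{\mu} = \text{{\bf v}}_{\text{sort}(\sigma(\mu_{1}),\ldots,\sigma(\mu_{k}))}$, reindex along the bijection $\mu \mapsto \text{sort}(\sigma(\mu))$ on the index set, and match coefficients. Your explicit verification that this reindexing map is a bijection preserving the index set is a point the paper leaves implicit, but the argument is the same coefficient-matching computation.
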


\begin{proof}
 From \eqref{20100000002470758282717075777A7KM1A}, we have that 
 $B \in \text{End}_{S_{n}}\left( V^{\otimes k} \right)$ if and only if 
 $B (\sigma t) = \sigma B (t)$ for each expression $t \in V^{\boxtimes k}$ 
 of the form $\text{{\bf v}}_{\lambda}$ 
 for $\lambda \in \mathcal{P}$
 such that $\ell(\lambda) = k$
 and such that $\ell\left( \lambda^{\text{T}} \right) \leq n$. 
 Applying $\sigma \in S_{n}$ to both sides of \eqref{writeBvlambda}, we obtain 
\begin{align}
\begin{split}
 \sigma B\left( \text{{\bf v}}_{\lambda} \right) 
 & = \sum_{\substack{ \mu \in \mathcal{P} \\ \ell(\mu) = k \\ \ell\left( \mu^{\text{T}} \right) \leq n }} 
 B^{\lambda}_{\mu} 
 \text{{\bf v}}_{\sigma(\mu_{1})} \boxtimes 
 \text{{\bf v}}_{\sigma(\mu_{2})} \boxtimes \cdots \boxtimes 
 \text{{\bf v}}_{\sigma(\mu_{k})} \\ 
 & = \sum_{\substack{ \mu \in \mathcal{P} \\ \ell(\mu) = k \\ \ell\left( \mu^{\text{T}} \right) \leq n }} 
 B^{\lambda}_{\mu} 
 \text{{\bf v}}_{ \text{sort}\left( 
 \sigma\left( \mu_{1} \right), 
 \sigma\left( \mu_{2} \right), \ldots, 
 \sigma\left( \mu_{k} \right) \right) }. 
\end{split}\label{202405301159AM1A}
\end{align}
 Similarly, we have that 
\begin{align*}
 B\left( \sigma \text{{\bf v}}_{\lambda} \right) 
 & = B\left( \sigma \left( \text{{\bf v}}_{\lambda_{1}} \boxtimes 
 \text{{\bf v}}_{\lambda_{2}} \boxtimes \cdots \boxtimes \text{{\bf v}}_{\lambda_{k}} \right) \right) \\ 
 & = B\left( \text{{\bf v}}_{\sigma(\lambda_{1})} 
 \boxtimes \text{{\bf v}}_{\sigma(\lambda_{2})} \boxtimes \cdots \boxtimes
 \text{{\bf v}}_{\sigma(\lambda_{k})} \right) \\
 & = B\left( \text{{\bf v}}_{ \text{sort}\left( 
 \sigma\left( \lambda_{1} \right), \sigma\left( \lambda_{2} \right), \ldots, \sigma\left( \lambda_{k} \right) \right) 
 } \right) \\ 
 & = \sum_{\substack{ \nu \in \mathcal{P} \\ \ell(\nu) = k \\ \ell\left( \nu^{\text{T}} \right) \leq n }} 
 B^{\text{sort}\left( \sigma\left( \lambda_{1} \right), \sigma\left( \lambda_{2} \right), \ldots, \sigma\left( \lambda_{k} 
 \right) \right)}_{\nu} \text{{\bf v}}_{\nu}. 
\end{align*}
 From the equality of $ \sigma B\left( \text{{\bf v}}_{\lambda} \right) $ 
 and $ B\left( \sigma \text{{\bf v}}_{\lambda} \right)$, 
 we compare the expansions and the coefficients in the $\text{{\bf v}}$-basis 
 for $ \sigma B\left( \text{{\bf v}}_{\lambda} \right) $ 
 and $ B\left( \sigma \text{{\bf v}}_{\lambda} \right)$, yielding 
\begin{align}
\begin{split}
 \sigma B\left( \text{{\bf v}}_{\lambda} \right)
 & = B\left( \sigma \text{{\bf v}}_{\lambda} \right) \\ 
 & = \sum_{\substack{ \mu \in \mathcal{P} \\ \ell(\mu) = k \\ \ell\left( \mu^{\text{T}} \right) \leq n }} 
 B^{\text{sort}\left( \sigma\left( \lambda_{1} \right), \sigma\left( \lambda_{2} \right), \ldots, \sigma\left( \lambda_{k} 
 \right) \right)}_{ \text{sort}\left( 
 \sigma\left( \mu_{1} \right), 
 \sigma\left( \mu_{2} \right), \ldots, 
 \sigma\left( \mu_{k} \right) \right) } 
 \text{{\bf v}}_{ \text{sort}\left( 
 \sigma\left( \mu_{1} \right), 
 \sigma\left( \mu_{2} \right), \ldots, 
 \sigma\left( \mu_{k} \right) \right) }, 
\end{split}\label{2024053012220202P2M22A}
\end{align}
 so that the desired result follows by comparing the coefficients in 
 \eqref{202405301159AM1A} and the coefficients in \eqref{2024053012220202P2M22A}. 
\end{proof}

 By analogy with Schur--Weyl duality for partition algebras, Lemma \ref{analogueJones} 
 gives us that a linear map $B\colon V^{\boxtimes k} \to V^{\boxtimes k}$ 
 is in $\text{End}_{S_{n}}\left( V^{\boxtimes k} \right)$ 
 if and only if the entries of the matrix corresponding to $B$
 are equal on orbits of $S_{n}$. 
 By analogy with how the dimension of the tensor power $V^{\otimes k}$
 is equal to $n^{k}$, we find that the dimension of 
 $V^{\boxtimes k}$ is equal to the number of integer partitions 
 satisfying the constraints shown in \eqref{basisboxpower}, 
 and this is easily seen to be equal to 
 $\binom{n+k-1}{n-1}$. 

\begin{definition}\label{definitionEunit}
 For integer partitions $\lambda$ and $\mu$ such that $\ell(\lambda) = \ell(\mu) = k$ and $\ell\left( \lambda^{\text{T}} \right) \leq n$ and $\ell\left( 
 \mu^{\text{T}} \right) \leq n$, let $E^{\lambda}_{\mu}$ denote the mapping in $\text{End}\left( V^{\boxtimes k} \right)$ that maps 
 $\text{{\bf v}}_{\lambda}$ to $\text{{\bf v}}_{\mu}$ and that maps $\text{{\bf v}}_{\nu}$ to $0$ for every integer partition $\nu$ 
 such that $\nu \neq \lambda$. 
\end{definition}

\begin{example}
 Letting $k = 2$ and $n = 4$, we let the $\text{{\bf v}}$-basis of $V^{\boxtimes k}$ be ordered according to the entries of the tuple 
 $( \text{{\bf v}}_{(1, 1)}, \text{{\bf v}}_{(2, 1)}$, $ \text{{\bf v}}_{(2, 2)}$, 
 $ \text{{\bf v}}_{(3, 1)}$, 
 $ \text{{\bf v}}_{(3, 2)}$, 
 $ \text{{\bf v}}_{(3, 3)}$, 
 $ \text{{\bf v}}_{(4, 1)}$, 
 $ \text{{\bf v}}_{(4, 2)}$, 
 $ \text{{\bf v}}_{(4, 3)}$, 
 $ \text{{\bf v}}_{(4, 4)} )$. 
 According to this ordering, the matrix corresponding to the linear morphism 
 $E^{(2, 1)}_{(1, 1)}$ is such that the $((2, 1), (1, 1))$-entry 
 is equal to $1$ and such that the remaining entries equal $0$. 
\end{example}

 The expansion in \eqref{writeBvlambda}
 may thus be rewritten so that 
 a given morphism $B \in \text{End}\left( V^{\boxtimes k} \right)$ may be expanded so that 
\begin{equation}\label{2027470753077407PM1A}
 B = \sum_{\substack{ \nu, \gamma \in \mathcal{P} \\ \ell(\nu) 
 = \ell(\gamma) = k \\ \ell\left( \nu^{\text{T}} \right) \leq n, \ell\left( \gamma^{\text{T}} \right) \leq n }} 
 B^{\nu}_{\gamma} E^{\nu}_{\gamma}. 
\end{equation}

\begin{definition}\label{definitionT}
 Letting $\lambda$ and $\mu$ satisfy the conditions in Definition \ref{definitionEunit}, we set 
\begin{equation}\label{20240530611PM1A}
 T^{\lambda}_{\mu} = \sum E^{\nu}_{\gamma}, 
\end{equation}
 where the sum in \eqref{20240530611PM1A} 
 is over all ordered pairs $(\nu, \gamma)$ 
 of integer partitions $\nu$ and $\gamma$
 such that $\ell(\nu) = \ell(\gamma) = k$ 
 and $\ell\left( \nu^{\text{T}} \right) \leq n$ 
 and $\ell\left( \gamma^{\text{T}} \right) \leq n$ 
 and there exists a permutation $\rho \in S_{n}$ 
 such that $\lambda = \text{sort}\big( \rho(\nu_{1}), \rho(\nu_{2}), \ldots, \rho(\nu_{k}) \big)$ 
 and $\mu = \text{sort}\big( \rho(\gamma_{1}), \rho(\gamma_{2}), \ldots, \rho(\gamma_{k}) \big)$. 
\end{definition}

\begin{example}\label{T2111E}
 For $k = 2$ and $n = 4$, the endomorphism $T^{(2, 1)}_{(1, 1)}$ satisfies 
\begin{align*}
 T^{(2, 1)}_{(1, 1)} 
 = & E^{(2, 1)}_{(1, 1)} + E^{(3, 1)}_{(1, 1)} + E^{(4, 1)}_{(1, 1)} + 
 E^{(3, 2)}_{(2, 2)} + E^{(4, 2)}_{(2, 2)} + E^{(2, 1)}_{(2, 2)} + 
 E^{(3, 1)}_{(3, 3)} + E^{(3, 2)}_{(3, 3)} + \\
 & E^{(4, 3)}_{(3, 3)} + 
 E^{(4, 1)}_{(4, 4)} + E^{(4, 2)}_{(4, 4)} + E^{(4, 3)}_{(4, 4)}. 
\end{align*}
 This is also equal to $T^{(3, 1)}_{(1, 1)}$ and $T^{(4, 1)}_{(1, 1)}$ and $T^{(2, 1)}_{(2, 2)}$, for example. 
\end{example}
 For an integer partition $p$ such $\ell\left( p^{\text{T}} \right) \leq n$, 
 we may rewrite $p$ as 
\begin{equation}\label{21000024707573707971727P7M71A}
 n^{m_{n}(p)} (n-1)^{m_{n-1}(p)} \cdots 1^{m_{1}(p)}. 
\end{equation}
 For $\sigma \in S_{n}$, we also may write
 $\sigma p$ in place of 
 $\text{sort}\big( \sigma(p_{1}), \sigma(p_{2}), \ldots, \sigma(p_{\ell(p)}) \big)$. 

\begin{lemma}\label{containmentlemma}
 Letting $\lambda$ and $\mu$ satisfy the conditions in Definition \ref{definitionEunit}, 
 we have that $T^{\lambda}_{\mu} \in \text{End}_{S_{n}}\left( V^{\boxtimes k} \right)$. 
\end{lemma}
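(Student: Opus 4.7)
The plan is to apply Lemma \ref{analogueJones}. Since $T^\lambda_\mu$ is, by its definition, already expanded in the $E$-basis as in \eqref{2027470753077407PM1A}, reading off its coefficients shows that the matrix entry $\bigl(T^\lambda_\mu\bigr)^\nu_\gamma$ equals $1$ when there exists $\rho \in S_n$ with $\lambda = \rho\nu$ and $\mu = \rho\gamma$, and $0$ otherwise. By Lemma \ref{analogueJones}, proving $T^\lambda_\mu \in \text{End}_{S_n}\bigl(V^{\boxtimes k}\bigr)$ reduces to verifying that this $0/1$-valued coefficient function is invariant under the simultaneous replacement $(\nu,\gamma) \mapsto (\sigma\nu, \sigma\gamma)$ for every $\sigma \in S_n$.

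First I would check that the shorthand $(\sigma,p) \mapsto \sigma p$ introduced just before Lemma \ref{containmentlemma}, with $\sigma p = \text{sort}\bigl(\sigma(p_1),\ldots,\sigma(p_{\ell(p)})\bigr)$, is a genuine left action of $S_n$ on the set of integer partitions whose parts lie in $\{1,\ldots,n\}$. The identity acts trivially, and the associativity $(\sigma\tau)p = \sigma(\tau p)$ follows because the sort operation depends only on the underlying multiset: both sides equal the sorted tuple associated with the multiset $\{\sigma\tau(p_i)\}_{i=1}^{\ell(p)}$.

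With this in hand, the invariance of the coefficient function is immediate: if $\rho$ witnesses the condition in Definition \ref{definitionT} for the pair $(\nu,\gamma)$, then $\rho\sigma^{-1}$ witnesses it for $(\sigma\nu, \sigma\gamma)$, since $(\rho\sigma^{-1})(\sigma\nu) = \rho\nu = \lambda$ by the associativity of the action, and similarly for the second coordinate. The reverse implication is obtained symmetrically by replacing $\sigma$ with $\sigma^{-1}$. Together, these give the required equality $\bigl(T^\lambda_\mu\bigr)^\nu_\gamma = \bigl(T^\lambda_\mu\bigr)^{\sigma\nu}_{\sigma\gamma}$, and Lemma \ref{analogueJones} then yields the conclusion.

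I do not expect a serious obstacle; conceptually, $T^\lambda_\mu$ is constructed precisely as the sum of $E$-basis elements over a diagonal $S_n$-orbit, so its belonging to the centralizer is morally a tautology. The only point requiring real care is confirming that $(\sigma,p) \mapsto \sigma p$ is well-defined and associative, since this is what legitimizes transporting the witness $\rho$ between the two pairs of partitions.
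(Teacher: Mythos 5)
Your proposal is correct and follows essentially the same route as the paper: both arguments read off the $0/1$ coefficients of $T^{\lambda}_{\mu}$ in the $E$-basis, verify that $(\sigma,p)\mapsto \sigma p$ composes properly (the paper does this via the multiplicity notation $n^{m_n(p)}\cdots 1^{m_1(p)}$, you via the multiset argument), transport the witness $\rho$ to $\rho\sigma^{-1}$, and invoke Lemma \ref{analogueJones}. The only cosmetic difference is that the paper handles the vanishing-coefficient case by contradiction while you note it follows by symmetry in $\sigma \leftrightarrow \sigma^{-1}$.
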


\begin{proof}
 We rewrite \eqref{20240530611PM1A} according to \eqref{2027470753077407PM1A}, with 
\begin{equation}\label{20q274q707qq5q3qq07q7qq4q07qqPMq1A}
 T^{\lambda}_{\mu} = \sum_{\substack{ \nu, \gamma \in \mathcal{P} \\ \ell(\nu) 
 = \ell(\gamma) = k \\ \ell\left( \nu^{\text{T}} \right) \leq n, \ell\left( \gamma^{\text{T}} \right) \leq n }} 
 \left( T^{\lambda}_{\mu} \right)^{\nu}_{\gamma} E^{\nu}_{\gamma}. 
\end{equation}
 Let $p$ and $q$ be integer 
 partitions satisfying the conditions for the index set in \eqref{20q274q707qq5q3qq07q7qq4q07qqPMq1A}. 
 According to the index set for \eqref{20240530611PM1A}, 
 we have that $\left( T^{\lambda}_{\mu} \right)^{p}_{q} = 1$ if 
 there exists a permutation $\rho \in S_{n}$ 
 such that 
 $\lambda = \rho p$ 
 and $\mu = \rho q$, 
 with $\left( T^{\lambda}_{\mu} \right)^{p}_{q} = 0$ otherwise. 

 First, suppose that $\left( T^{\lambda}_{\mu} \right)^{\nu}_{\gamma} = 1$. 
 So, there exists a permutation $\rho \in S_{n}$ such that
 $\lambda = \rho \nu$ and $ \mu = \rho \gamma$. 
 Let $\sigma \in S_{n}$ be arbitrary. 
 By writing $\nu$ as in 
 \eqref{21000024707573707971727P7M71A}, we find that that 
 $\sigma \nu$ is obtained by sorting 
 $ (\sigma(n))^{m_{n}(\nu)} (\sigma(n-1))^{m_{n-1}(\nu)} \cdots (\sigma(1))^{m_{1}(\nu)}$, 
 so that a factor-by-factor application of $\sigma^{-1}$ gives us that 
 the application 
 of $\sigma^{-1}$ to 
 $\sigma \nu$ yields 
 $\nu$, and similarly for $\gamma$. 
 So, there exists a permutation $\rho \sigma^{-1} \in S_{n}$
 such that $\lambda = \rho \sigma^{-1}(\sigma \nu)$
 and such that 
 $\mu = \rho \sigma^{-1}(\sigma \gamma)$, 
 so that $ \left( T^{\lambda}_{\mu} 
 \right)^{\sigma \nu}_{ \sigma \gamma } = 1$, 
 according to the given characterization of the coefficients in \eqref{20q274q707qq5q3qq07q7qq4q07qqPMq1A}. 

 Now, suppose that $\left( T_{\mu}^{\lambda} \right)^{\nu}_{\gamma} = 0$. 
 So, it is not the case that there exists a permutation $\rho \in S_{n}$ 
 such that $\lambda = \rho \nu$ and $\mu = \rho \gamma$. 
 We again let $\sigma \in S_{n}$ be arbitrary. 
 If $\left( T_{\mu}^{\lambda} \right)^{\sigma \nu}_{\sigma \gamma} = 1$, 
 then there would exist a permutation $r \in S_{n}$ such that $\lambda = r \sigma \nu$
 and $\mu = r \sigma \gamma$, contradicting 
 an equivalent 
 formulation of $\left( T_{\mu}^{\lambda} \right)^{\nu}_{\gamma} = 0$. 
 So, $\left( T_{\mu}^{\lambda} \right)^{\sigma \nu}_{\sigma \gamma} = 0$, 
 so that Lemma \ref{analogueJones}
 gives us that $T^{\lambda}_{\mu} \in \text{End}_{S_{n}}\left( V^{\boxtimes k} \right)$. 
\end{proof}

\begin{definition}\label{pairsim}
 Let $(a, b)$ and $(c, d)$ 
 be pairs of partitions satisfying the index conditions in \eqref{2027470753077407PM1A}, 
 again for an endomorphism $B\colon V^{\boxtimes k} \to V^{\boxtimes k}$. 
 We let the relation $\sim$ be such that 
 $(a, b) \sim (c, d)$ is equivalent 
 to there existing a permutation $\sigma \in S_{n}$ 
 such that $a = \sigma c$ and $b = \sigma d$. 
\end{definition}

 The relation in Definition \ref{pairsim} 
 is an equivalence relation and is to be applied to obtain 
 an analogue of orbit bases for partition algebras. 

\begin{lemma}\label{2029405390191481P1M1A}
 The centralizer algebra $\text{End}_{S_{n}}\big( V^{\boxtimes k} \big)$ is spanned by the family of 
 expressions of the form $T^{\lambda}_{\mu}$, 
 for $\lambda$ and $\mu$ as 
 in Definition \ref{definitionT}. 
\end{lemma}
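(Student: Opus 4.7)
The plan is to start with an arbitrary $B \in \text{End}_{S_n}\big(V^{\boxtimes k}\big)$ and rewrite its $E$-basis expansion \eqref{2027470753077407PM1A} by collecting terms along $\sim$-equivalence classes, then identify the resulting inner sums as the $T^\lambda_\mu$. Concretely, I first invoke Lemma \ref{analogueJones} to conclude that the coefficient $B^\nu_\gamma$ depends only on the $\sim$-class of $(\nu,\gamma)$: the lemma says $B^\nu_\gamma = B^{\sigma\nu}_{\sigma\gamma}$ for every $\sigma \in S_n$, which is exactly the statement that $B^\nu_\gamma$ is constant as $(\nu,\gamma)$ ranges over the orbit of $S_n$ on pairs of partitions under the diagonal action of Definition \ref{pairsim}.

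Next, I partition the index set appearing in \eqref{2027470753077407PM1A} into $\sim$-equivalence classes and choose, for each class $\mathcal{C}$, a representative pair $(\lambda_\mathcal{C},\mu_\mathcal{C})$. Because $B^\nu_\gamma$ is constant on each $\mathcal{C}$ and equal to $B^{\lambda_\mathcal{C}}_{\mu_\mathcal{C}}$ there, I can rewrite
\begin{equation*}
 B = \sum_{\mathcal{C}} B^{\lambda_\mathcal{C}}_{\mu_\mathcal{C}} \sum_{(\nu,\gamma) \in \mathcal{C}} E^\nu_\gamma.
\end{equation*}
The inner sum is precisely $T^{\lambda_\mathcal{C}}_{\mu_\mathcal{C}}$ by Definition \ref{definitionT}, since that definition sums $E^\nu_\gamma$ over exactly those $(\nu,\gamma)$ for which some $\rho \in S_n$ sends $(\nu,\gamma)$ to $(\lambda_\mathcal{C},\mu_\mathcal{C})$, i.e.\ over the $\sim$-class of $(\lambda_\mathcal{C},\mu_\mathcal{C})$. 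Hence $B$ is a linear combination of the $T^\lambda_\mu$, giving the spanning statement. Combined with Lemma \ref{containmentlemma}, which ensures each $T^\lambda_\mu$ lies in the centralizer, this yields the result.

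There is essentially no serious obstacle; the real content has already been done in Lemmas \ref{analogueJones} and \ref{containmentlemma}. The only mild point to handle cleanly is the bookkeeping: one should make sure the equivalence relation $\sim$ genuinely partitions the index set (symmetry, reflexivity, and transitivity are straightforward from $S_n$ being a group), and that the definition of $T^{\lambda_\mathcal{C}}_{\mu_\mathcal{C}}$ is independent of which representative of $\mathcal{C}$ is selected, which is immediate from the quantifier ``there exists $\rho \in S_n$'' in Definition \ref{definitionT}. Once this is noted, the proof is a one-line regrouping of \eqref{2027470753077407PM1A}.
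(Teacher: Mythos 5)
Your proposal is correct and follows essentially the same route as the paper's proof: expand $B$ in the matrix units $E^{\nu}_{\gamma}$, use Lemma \ref{analogueJones} to see that the coefficients are constant on $\sim$-classes, and regroup so that each class contributes a scalar multiple of the corresponding $T^{\lambda}_{\mu}$. The only (harmless) difference is that you also cite Lemma \ref{containmentlemma}, which the paper defers to the proof of Theorem \ref{Cbasis}.
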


\begin{proof}
 Again for $B \in \text{End}\big( V^{\boxtimes k} \big)$, we rearrange the matrix unit expansion in 
 \eqref{2027470753077407PM1A} according to equivalence classes with respect to $\sim$. Let $\mathcal{C}$ denote an 
 equivalence class with respect to $\sim$. 
 According to our rearranged groupings of the terms in \eqref{2027470753077407PM1A}, 
 one of the groupings of terms we obtain is 
\begin{equation}\label{202747qq0q7q5q3qq0q7q7q4q0q7qPqMq1qA}
 \sum_{\substack{ a, b \in \mathcal{P} \\ \ell(a) 
 = \ell(b) = k \\ \ell\left( a^{\text{T}} \right) \leq n, \ell\left( b^{\text{T}} \right) \leq n \\ (a, b) \in \mathcal{C}
 }} B^{a}_{b} E^{a}_{b}. 
\end{equation}
 Now, we further assume that $B$ is in the centralizer algebra $\text{End}_{S_{n}}\big( V^{\boxtimes k} \big)$. 
 For pairs $(a, b)$ and $(c, d)$ appearing in the index set in \eqref{202747qq0q7q5q3qq0q7q7q4q0q7qPqMq1qA}, 
 Since $a = \sigma c$ and $b = \sigma d$, we have, by Lemma \ref{analogueJones}, 
 that $B^{a}_{b} = B^{c}_{d}$. 
 So, all of the coefficients in \eqref{202747qq0q7q5q3qq0q7q7q4q0q7qPqMq1qA} 
 are equal, for a given equivalence class $\mathcal{C}$. So, we may rewrite \eqref{202747qq0q7q5q3qq0q7q7q4q0q7qPqMq1qA} as 
\begin{equation}\label{coefficientoutisde}
 B^{\mathcal{C}} \sum_{\substack{ a, b \in \mathcal{P} \\ \ell(a) 
 = \ell(b) = k \\ \ell\left( a^{\text{T}} \right) \leq n, \ell\left( b^{\text{T}} \right) \leq n \\ (a, b) \in \mathcal{C}
 }} E^{a}_{b}, 
\end{equation} 
 writing $B^{\mathcal{C}} = B^{a}_{b}$ for $(a, b) \in \mathcal{C}$. For any $(e, f) \in \mathcal{C}$, we have, by Definition \ref{definitionT}, that 
 \eqref{coefficientoutisde} equals $ B^{\mathcal{C}} T^{e}_{f}$, which we rewrite, in a well defined way, as $ B^{\mathcal{C}} T^{\mathcal{C}}$. 
 From the foregoing considerations, we have that any morphism $B$ in $\text{End}_{S_{n}}\big( V^{\boxtimes k} \big)$ can be expanded as $ B = 
 \sum_{\mathcal{C}} B^{\mathcal{C}} T^{\mathcal{C}}$. 
\end{proof}

 As suggested in the proof of Lemma \ref{2029405390191481P1M1A}, 
 we extend the equivalence relation $\sim$ in Definition \ref{pairsim} 
 so as to write $T^{a}_{b} \sim T^{c}_{b}$ for $(a, b) \sim (c, d)$. 

\begin{theorem}\label{Cbasis}
 The family $\{ T^{\mathcal{C}} \}_{\mathcal{C}}$ indexed by equivalence classes $\mathcal{C}$ 
 with respect to $\sim$ on the set of pairs $(\nu, \gamma)$ of integer partitions 
 such that $\ell(\nu) = \ell(\gamma) = k$ and $\ell\left( \nu^{\text{T}} \right) \leq n$ 
 and $\ell\left( \gamma^{\text{T}} \right) \leq n$ 
 is a basis of $\text{End}_{S_{n}}\big( V^{\boxtimes k} \big)$. 
\end{theorem}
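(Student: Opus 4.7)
The plan is to combine the two directions already prepared by the preceding lemmas: spanning is essentially given, so the real content is linear independence, and that follows from a disjoint-support argument at the level of the matrix units $E^{\nu}_{\gamma}$.

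First, I would observe that each $T^{\mathcal{C}}$ lies in the centralizer algebra by Lemma \ref{containmentlemma}, and that the family spans $\text{End}_{S_{n}}\big(V^{\boxtimes k}\big)$ by Lemma \ref{2029405390191481P1M1A}, which exhibits every centralizing $B$ as $\sum_{\mathcal{C}} B^{\mathcal{C}} T^{\mathcal{C}}$. So the theorem reduces to verifying that $\{T^{\mathcal{C}}\}_{\mathcal{C}}$ is linearly independent.

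For linear independence, the key point is that distinct equivalence classes $\mathcal{C}$ and $\mathcal{C}'$ are, by definition of $\sim$, disjoint sets of pairs $(\nu,\gamma)$ of partitions of length $k$ with entries bounded by $n$. Expanding as in \eqref{20q274q707qq5q3qq07q7qq4q07qqPMq1A}, each $T^{\mathcal{C}}$ has the form
\begin{equation*}
T^{\mathcal{C}} = \sum_{(\nu,\gamma)\in\mathcal{C}} E^{\nu}_{\gamma},
\end{equation*}
so $T^{\mathcal{C}}$ and $T^{\mathcal{C}'}$ involve no common matrix unit when $\mathcal{C}\neq\mathcal{C}'$. Since the full family $\{E^{\nu}_{\gamma}\}$ (ranging over all valid pairs) is a basis of $\text{End}\big(V^{\boxtimes k}\big)$ by Definition \ref{definitionEunit} and the expansion \eqref{2027470753077407PM1A}, any linear relation $\sum_{\mathcal{C}} c_{\mathcal{C}} T^{\mathcal{C}} = 0$ forces $c_{\mathcal{C}} \cdot 1 = 0$ for each coefficient of each $E^{\nu}_{\gamma}$ with $(\nu,\gamma)\in\mathcal{C}$ (the coefficients coming from different classes cannot cancel because they sit in disjoint coordinates). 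Hence all $c_{\mathcal{C}}$ vanish.

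I do not anticipate a serious obstacle here: the delicate work has already been absorbed into Lemma \ref{analogueJones} (the orbit characterization), Lemma \ref{containmentlemma} (showing $T^{\lambda}_{\mu}$ is actually centralizing), and Lemma \ref{2029405390191481P1M1A} (spanning via averaging coefficients over equivalence classes). The only care needed is to note that the relation $\sim$ on pairs partitions the indexing set for \eqref{2027470753077407PM1A}, so the orbit-sum representation $T^{\mathcal{C}}$ is well-defined independent of the chosen representative $(e,f)\in\mathcal{C}$, and then invoke linear independence of the matrix unit basis $\{E^{\nu}_{\gamma}\}$ directly.
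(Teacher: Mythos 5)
Your proposal is correct and follows essentially the same route as the paper: containment via Lemma \ref{containmentlemma}, spanning via Lemma \ref{2029405390191481P1M1A}, and linear independence from the matrix-unit expansions of the $T^{\mathcal{C}}$. In fact your disjoint-support argument makes explicit the independence step that the paper's proof only asserts in one line, so your write-up is, if anything, more complete.
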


\begin{proof}
 By Lemma \ref{containmentlemma}, 
 we have that $\{ T^{\mathcal{C}} \}_{\mathcal{C}} \subseteq \text{End}_{S_{n}}\big( V^{\boxtimes k} \big)$. 
 By Lemma \ref{2029405390191481P1M1A}, we have that 
 $\text{span} \{ T^{\mathcal{C}} \}_{\mathcal{C}} = \text{End}_{S_{n}}\big( V^{\boxtimes k} \big)$. 
 By Definition \ref{definitionT}, each expression of the form $T^{\mathcal{C}}$ 
 is a linear combination of matrix units of $\text{End}\big( V^{\boxtimes} \big)$, 
 yielding a linearly independent spanning subset. 
\end{proof}

\begin{example}\label{dimension9example}
 As in Example \ref{T2111E}, we set $k = 2$ and $n = 4$. 
 As illustrated in Example \ref{T2111E}, we have that 
 $T^{(2, 1)}_{(1, 1)} = T^{(3, 1)}_{(1, 1)} = T^{(4, 1)}_{(1, 1)} = T^{(2, 1)}_{(2, 2)}$. 
 According to the notation in Theorem \ref{Cbasis}, 
 each of the equivalent expressions among 
 $T^{(2, 1)}_{(1, 1)}$, $ T^{(3, 1)}_{(1, 1)}$, $ T^{(4, 1)}_{(1, 1)}$ and $ T^{(2, 1)}_{(2, 2)}$
 is equal to an expression of the form $T^{\mathcal{C}}$, for an equivalence class
 of the form specified in Theorem \ref{Cbasis}. 
 It can be shown that a basis of $\text{End}_{S_{n}}\big( V^{\boxtimes k} \big)$ is 
\begin{equation}\label{basisk2n4}
 \left\{ T^{(2, 1)}_{(4, 3)}, T^{(1, 1)}_{(3, 2)}, T^{(2, 1)}_{(3, 1)}, 
 T^{(3, 2)}_{(1, 1)}, T^{(1, 1)}_{(2, 2)}, T^{(2, 1)}_{(2, 1)}, T^{(1, 1)}_{(2, 1)}, 
 T^{(2, 1)}_{(1, 1)}, T^{(1, 1)}_{(1, 1)} \right\}. 
\end{equation}
 With regard to the basis element $ T^{(2, 1)}_{(4, 3)}$ in \eqref{basisk2n4}, observe the significance of $n$ being greater than or equal to $2k$. Also 
 observe that the dimension of $\text{End}_{S_{n}}\big( V^{\boxtimes k} \big)$ is strictly less than that for $\text{End}_{S_{n}}\big( V^{\otimes k} \big)$, 
 for the $k = 2$ and $n = 4$ case under consideration. 
\end{example}

 With regard to our notation for set-partitions, we typically let
 set-partitions be of $\{ 1, 2, \ldots, k \} \cup \{ 1', 2', \ldots, k' \}$, 
 where $\{ 1, 2, \ldots, k \}$ and $\{ 1', 2', \ldots, k' \}$
 are disjoint, with 
\begin{equation}\label{primeordering}
 1 < 2 < \cdots < k < 1' < 2' < \cdots < k'. 
\end{equation}
 We also adopt the convention whereby 
 the elements in a given set-partition of $\{ 1, 2, \ldots, k \} \cup \{ 1', 2', \ldots, k' \}$ 
 are ordered reverse-lexicographically, with respect to \eqref{primeordering}. 

\begin{definition}\label{definitionlozenge}
 Let $\mathcal{S}$ and $\mathcal{T}$ be set-partitions of $\{ 1, 2, \ldots, k, 1', 2', \ldots, k' \}$. Define the relation $\wasylozenge$ so that $ 
 \mathcal{S} \wasylozenge \mathcal{T}$ is equivalent to there being permutations $\alpha \in S_{k}$ and $\beta \in S_k$
 such that: By replacing the unique occurrence of $i$ in $\mathcal{S}$ 
 with $\alpha(i)$ for $i \in \{ 1, 2, \ldots, k \}$ and by replacing the unique occurrence of 
 $j'$ in $\mathcal{S}$ with $(\beta(j))'$ for $j \in \{ 1, 2, \ldots, k \}$, 
 the resultant set-partition equals $\mathcal{T}$. 
\end{definition}

\begin{example}
 According to Definition \ref{definitionlozenge}, 
 we have that $\{ \{ 1 \}$, $ \{ 2$, $ 1'$, $ 2' \} \}$ $ \wasylozenge $ $ \{ \{ 2 \}$, $ \{ 1$, $ 1'$, $ 2' \} \}$. 
\end{example}

 The relation $\wasylozenge $ in Definition \ref{definitionlozenge} is an equivalence relation, and this provides a key to our combinatorial characterization of 
 the basis elements of $\text{End}_{S_{n}}\big( V^{\boxtimes k} \big)$. To begin with, we adopt the usual graphical notation described below for 
 partition diagrams. Mimicking notation from Beyene et al.\ \cite{BeyeneBackelinMantaciFufa2023}, we let $\text{SP}(k, k')$ denote the set of all set-partitions of $\{ 1, 
 2, \ldots, k, 1', 2', \ldots, k' \}$. 

\begin{definition}
 Let $\mathcal{S}$ be a set-partition in $\text{SP}(k, k')$. 
 We consider two graphs on $\{ 1, 2, \ldots, k, 1', 2', \ldots, k' \}$ 
 be considered as equivalent if the connected components of these graphs are the same. 
 For a graph $G$ such that the components of $G$ 
 agree with the elements of $\mathcal{S}$, 
 we let the \emph{partition diagram} of $\mathcal{S}$ 
 refer to the equivalence class of $G$. 
 This equivalence class may be denoted with any graph in it, 
 with vertices labeled with $1 < 2 < \cdots < k$ 
 arranged into a top row and with 
 vertices labeled with $1' < 2' < \cdots k'$ 
 arranged into a bottom row. 
\end{definition}

 We may denote, as below, a given set-partition with any graph in the corresponding partition diagram. 

\begin{example}
 The partition diagram for $\{ \{ 1, 2, 3, 5' \}, \{ 4, 5 \}, \{ 1', 2' \}, \{ 3', 4' \} \}$ 
 may be denoted with the following graph: 
\begin{equation}\label{20240531937PM1A}
 \begin{tikzpicture}[scale = 0.5,thick, baseline={(0,-1ex/2)}] 
\tikzstyle{vertex} = [shape = circle, minimum size = 7pt, inner sep = 1pt] 
\node[vertex] (G--5) at (6.0, -1) [shape = circle, draw] {}; 
\node[vertex] (G-1) at (0.0, 1) [shape = circle, draw] {}; 
\node[vertex] (G-2) at (1.5, 1) [shape = circle, draw] {}; 
\node[vertex] (G-3) at (3.0, 1) [shape = circle, draw] {}; 
\node[vertex] (G--4) at (4.5, -1) [shape = circle, draw] {}; 
\node[vertex] (G--3) at (3.0, -1) [shape = circle, draw] {}; 
\node[vertex] (G--2) at (1.5, -1) [shape = circle, draw] {}; 
\node[vertex] (G--1) at (0.0, -1) [shape = circle, draw] {}; 
\node[vertex] (G-4) at (4.5, 1) [shape = circle, draw] {}; 
\node[vertex] (G-5) at (6.0, 1) [shape = circle, draw] {}; 
\draw[] (G-1) .. controls +(0.5, -0.5) and +(-0.5, -0.5) .. (G-2); 
\draw[] (G-2) .. controls +(0.5, -0.5) and +(-0.5, -0.5) .. (G-3); 
\draw[] (G-3) .. controls +(1, -1) and +(-1, 1) .. (G--5); 
\draw[] (G--5) .. controls +(-1, 1) and +(1, -1) .. (G-1); 
\draw[] (G--4) .. controls +(-0.5, 0.5) and +(0.5, 0.5) .. (G--3); 
\draw[] (G--2) .. controls +(-0.5, 0.5) and +(0.5, 0.5) .. (G--1); 
\draw[] (G-4) .. controls +(0.5, -0.5) and +(-0.5, -0.5) .. (G-5); 
\end{tikzpicture}. 
\end{equation} 
 According to the equivalence relation in Definition \ref{definitionlozenge}, 
 we have, via the application of the permutation 
 $\left(\begin{smallmatrix} 
1 & 2 & 3 & 4 & 5 \\ 
 2& 3 & 4 & 5 & 1
\end{smallmatrix}\right)$ 
 to the unprimed integers in the above set-partition and by applying 
 $\left(\begin{smallmatrix} 
1 & 2 & 3 & 4 & 5 \\ 
 1& 3 & 2 & 4 & 5
\end{smallmatrix}\right)$ 
 to the primed integers, we obtain that 
 $\{ \{ 1$, $2$, $ 3$, $ 5' \}$, $ \{ 4$, $ 5 \}$, $ \{ 1'$, $ 2' \}$, $ \{ 3'$, $ 4' \} \} $
 $ \wasylozenge $ $ \{ \{ 2$, $ 3$, $ 4$, $ 5' \}$, $ \{ 5$, $ 1 \}$, $ \{ 1'$, $ 3' \}$, $ \{ 2'$, $ 4' \} \}$. 
 This latter set-partition may be denoted as 
\begin{equation}\label{202405319q42qqtP2M2A}
\begin{tikzpicture}[scale = 0.5,thick, baseline={(0,-1ex/2)}] 
\tikzstyle{vertex} = [shape = circle, minimum size = 7pt, inner sep = 1pt] 
\node[vertex] (G--5) at (6.0, -1) [shape = circle, draw] {}; 
\node[vertex] (G-2) at (1.5, 1) [shape = circle, draw] {}; 
\node[vertex] (G-3) at (3.0, 1) [shape = circle, draw] {}; 
\node[vertex] (G-4) at (4.5, 1) [shape = circle, draw] {}; 
\node[vertex] (G--4) at (4.5, -1) [shape = circle, draw] {}; 
\node[vertex] (G--2) at (1.5, -1) [shape = circle, draw] {}; 
\node[vertex] (G--3) at (3.0, -1) [shape = circle, draw] {}; 
\node[vertex] (G--1) at (0.0, -1) [shape = circle, draw] {}; 
\node[vertex] (G-1) at (0.0, 1) [shape = circle, draw] {}; 
\node[vertex] (G-5) at (6.0, 1) [shape = circle, draw] {}; 
\draw[] (G-2) .. controls +(0.5, -0.5) and +(-0.5, -0.5) .. (G-3); 
\draw[] (G-3) .. controls +(0.5, -0.5) and +(-0.5, -0.5) .. (G-4); 
\draw[] (G-4) .. controls +(0.75, -1) and +(-0.75, 1) .. (G--5); 
\draw[] (G--5) .. controls +(-1, 1) and +(1, -1) .. (G-2); 
\draw[] (G--4) .. controls +(-0.6, 0.6) and +(0.6, 0.6) .. (G--2); 
\draw[] (G--3) .. controls +(-0.6, 0.6) and +(0.6, 0.6) .. (G--1); 
\draw[] (G-1) .. controls +(0.8, -0.8) and +(-0.8, -0.8) .. (G-5); 
\end{tikzpicture}.
\end{equation}
\end{example}

\begin{definition}\label{definitionMS}
 Let $\mathcal{S}$ be a set-partition in $\text{SP}(k, k')$. 
 Let $G$ be any graph in the equivalence class given by the partition diagram for $\mathcal{S}$. 
 We list the distinct components of $G$ according to 
 the order in which these components appear 
 among the ordered vertices $1 < 2 < \cdots < k < 1' < 2' < \cdots < k'$, i.e., 
 by first listing the unique component containing $1$, and by then listing a component, if it exists, 
 containing the smallest vertex outside of the first component, etc. 
 We then relabel the vertices of $G$, 
 by labeling the vertices in the $i^{\text{th}}$ component with $i$. 
 We then define the integer matrix $M(\mathcal{S})$ corresponding to $\mathcal{S}$ 
 as the $2 \times k$ matrix obtained from 
 $\left(\begin{smallmatrix} 
1 & 2 & \cdots & k \\ 
 1' & 2' & \cdots & k' 
\end{smallmatrix}\right)$ 
 by replacing each expression $e$ appearing among the entries with 
 the new label for the vertex originally labelled with $e$ in $G$. 
\end{definition}

\begin{example}
 With regard to the partition diagrams shown in 
 \eqref{20240531937PM1A} and \eqref{202405319q42qqtP2M2A}, we find that 
\begin{equation}\label{Mgraphfirst}
 M\left( \begin{tikzpicture}[scale = 0.5,thick, baseline={(0,-1ex/2)}] 
\tikzstyle{vertex} = [shape = circle, minimum size = 7pt, inner sep = 1pt] 
\node[vertex] (G--5) at (6.0, -1) [shape = circle, draw] {}; 
\node[vertex] (G-1) at (0.0, 1) [shape = circle, draw] {}; 
\node[vertex] (G-2) at (1.5, 1) [shape = circle, draw] {}; 
\node[vertex] (G-3) at (3.0, 1) [shape = circle, draw] {}; 
\node[vertex] (G--4) at (4.5, -1) [shape = circle, draw] {}; 
\node[vertex] (G--3) at (3.0, -1) [shape = circle, draw] {}; 
\node[vertex] (G--2) at (1.5, -1) [shape = circle, draw] {}; 
\node[vertex] (G--1) at (0.0, -1) [shape = circle, draw] {}; 
\node[vertex] (G-4) at (4.5, 1) [shape = circle, draw] {}; 
\node[vertex] (G-5) at (6.0, 1) [shape = circle, draw] {}; 
\draw[] (G-1) .. controls +(0.5, -0.5) and +(-0.5, -0.5) .. (G-2); 
\draw[] (G-2) .. controls +(0.5, -0.5) and +(-0.5, -0.5) .. (G-3); 
\draw[] (G-3) .. controls +(1, -1) and +(-1, 1) .. (G--5); 
\draw[] (G--5) .. controls +(-1, 1) and +(1, -1) .. (G-1); 
\draw[] (G--4) .. controls +(-0.5, 0.5) and +(0.5, 0.5) .. (G--3); 
\draw[] (G--2) .. controls +(-0.5, 0.5) and +(0.5, 0.5) .. (G--1); 
\draw[] (G-4) .. controls +(0.5, -0.5) and +(-0.5, -0.5) .. (G-5); 
\end{tikzpicture} \right) = \left( \begin{matrix}
 1 & 1 & 1 & 2 & 2 \\
 3 & 3 & 4 & 4 & 1 
 \end{matrix} \right)
\end{equation}
 and that 
\begin{equation}\label{Mgraphsecond}
 M\left( \begin{tikzpicture}[scale = 0.5,thick, baseline={(0,-1ex/2)}] 
\tikzstyle{vertex} = [shape = circle, minimum size = 7pt, inner sep = 1pt] 
\node[vertex] (G--5) at (6.0, -1) [shape = circle, draw] {}; 
\node[vertex] (G-2) at (1.5, 1) [shape = circle, draw] {}; 
\node[vertex] (G-3) at (3.0, 1) [shape = circle, draw] {}; 
\node[vertex] (G-4) at (4.5, 1) [shape = circle, draw] {}; 
\node[vertex] (G--4) at (4.5, -1) [shape = circle, draw] {}; 
\node[vertex] (G--2) at (1.5, -1) [shape = circle, draw] {}; 
\node[vertex] (G--3) at (3.0, -1) [shape = circle, draw] {}; 
\node[vertex] (G--1) at (0.0, -1) [shape = circle, draw] {}; 
\node[vertex] (G-1) at (0.0, 1) [shape = circle, draw] {}; 
\node[vertex] (G-5) at (6.0, 1) [shape = circle, draw] {}; 
\draw[] (G-2) .. controls +(0.5, -0.5) and +(-0.5, -0.5) .. (G-3); 
\draw[] (G-3) .. controls +(0.5, -0.5) and +(-0.5, -0.5) .. (G-4); 
\draw[] (G-4) .. controls +(0.75, -1) and +(-0.75, 1) .. (G--5); 
\draw[] (G--5) .. controls +(-1, 1) and +(1, -1) .. (G-2); 
\draw[] (G--4) .. controls +(-0.6, 0.6) and +(0.6, 0.6) .. (G--2); 
\draw[] (G--3) .. controls +(-0.6, 0.6) and +(0.6, 0.6) .. (G--1); 
\draw[] (G-1) .. controls +(0.8, -0.8) and +(-0.8, -0.8) .. (G-5); 
\end{tikzpicture} \right) = \left( \begin{matrix}
 1 & 2 & 2 & 2 & 1 \\
 3 & 4 & 3 & 4 & 2 
 \end{matrix} \right). 
\end{equation} 
\end{example}

\begin{definition}
 We again let $\mathcal{S} \in \text{SP}(k, k')$. 
 We define $\lambda(\mathcal{S})$ and $\mu(\mathcal{S})$, respectively, 
 as the integer partitions 
 obtained by sorting the entries of the upper and lower rows of $M(\mathcal{S})$. 
\end{definition}

\begin{example}
 With regard to the matrix evaluations in \eqref{Mgraphfirst} 
 and \eqref{Mgraphsecond}, 
 we set $\mathcal{S}$ $=$ $ \{ \{ 1$, $2$, $3$, $ 5' \}$, $ \{ 4$, $ 5 \}$, $ \{ 1'$, $ 2' \}$, $ \{ 3'$, $ 4' \} \}$ 
 and $\mathcal{T}$ $ = $ $ \{ \{ 1$, $ 5 \}$, $ \{ 2$, $ 3$, $ 4$, $ 5' \}$, $ \{ 1'$, $ 3' \}$, $ \{ 2'$, $ 4' \} \}$, 
 with $\mathcal{S} \wasylozenge \mathcal{T}$. This yields 
 $ \lambda(\mathcal{S}) = (2, 2, 1, 1, 1)$, 
 $ \mu(\mathcal{S}) = (4, 4, 3, 3, 1)$, 
 $ \lambda(\mathcal{T}) = (2, 2, 2, 1, 1)$, 
 and $ \mu(\mathcal{T}) = (4, 4, 3, 3, 2)$. 
 Setting $n = 10$, with $2k \leq n$, we find that 
 $T^{\lambda(\mathcal{S})}_{\mu(\mathcal{S})} \sim T^{\lambda(\mathcal{T})}_{\mu(\mathcal{T})}$. 
\end{example}

\begin{theorem}\label{theoremfirstdimension}
 If $2k \leq n$, then the dimension of $\text{End}_{S_{n}}\big( V^{\boxtimes k} \big)$ 
 is equal to the number of equivalence classes with respect to $\wasylozenge$ 
 of the set of set-partitions of $\{ 1, 2, \ldots, k, 1', 2', \ldots, k' \}$. 
\end{theorem}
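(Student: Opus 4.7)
The plan is to combine Theorem \ref{Cbasis} with a bijection between $\wasylozenge$-equivalence classes in $\text{SP}(k,k')$ and $\sim$-equivalence classes of pairs $(\nu,\gamma)$ of integer partitions of length $k$ with $\ell\big(\nu^{\text{T}}\big),\ell\big(\gamma^{\text{T}}\big)\le n$. Theorem \ref{Cbasis} already identifies $\dim\text{End}_{S_{n}}\big(V^{\boxtimes k}\big)$ with the number of such $\sim$-classes, so it suffices to promote the map $\mathcal{S}\mapsto(\lambda(\mathcal{S}),\mu(\mathcal{S}))$ to a bijection on equivalence classes.

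The cleanest approach is to identify a common combinatorial invariant on both sides. For $\mathcal{S}$, I would attach to each block $B$ the pair of nonnegative integers $\big(|B\cap\{1,\ldots,k\}|,|B\cap\{1',\ldots,k'\}|\big)$ and call the resulting multiset over blocks the \emph{shape} of $\mathcal{S}$. Since $\wasylozenge$ permutes the unprimed and primed labels independently, it preserves these intersection sizes, so the shape is $\wasylozenge$-invariant; conversely, two set-partitions of the same shape can be matched block-by-block and then vertex-by-vertex within each block (separately on top and bottom), so the shape is a complete invariant. For a pair $(\nu,\gamma)$, I would instead form the multiset of nonzero pairs $\big(m_{v}(\nu),m_{v}(\gamma)\big)$ as $v$ ranges over $\{1,\ldots,n\}$; since $\sigma\in S_{n}$ acts by permuting the index $v$, the same reasoning shows this too is a complete invariant.

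Next I would verify that $\mathcal{S}\mapsto(\lambda(\mathcal{S}),\mu(\mathcal{S}))$ sends shape to shape. By Definition \ref{definitionMS}, each component of $\mathcal{S}$ receives a canonical label $i$ that appears in the top row of $M(\mathcal{S})$ exactly $|B_{i}\cap\{1,\ldots,k\}|$ times and in the bottom row $|B_{i}\cap\{1',\ldots,k'\}|$ times; sorting does not alter multiplicities, so the multiset $\big\{(m_{i}(\lambda(\mathcal{S})),m_{i}(\mu(\mathcal{S})))\big\}_{i}$ of nonzero pairs coincides with the shape of $\mathcal{S}$. The hypothesis $2k\le n$ enters both here and in the surjectivity step: the number of blocks of any $\mathcal{S}\in\text{SP}(k,k')$ is at most $2k\le n$, so the canonical labels always fit inside $\{1,\ldots,n\}$, and conversely any multiset of nonzero integer pairs summing to $(k,k)$ and having at most $n$ members is the shape of an actual set-partition, whose image realizes a prescribed $\sim$-class. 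Together with the invariant characterization above, this yields the bijection and hence the dimension count.

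The main obstacle, as I see it, is the interaction between the canonical component-labeling of Definition \ref{definitionMS} and the permutations involved in $\wasylozenge$ and $\sim$: when unprimed or primed vertices are permuted, the order in which components are first encountered changes, hence so does the canonical label, and one must check that the resulting pair of partitions changes only by a single $S_{n}$-permutation applied simultaneously to $\lambda$ and $\mu$. Phrasing the argument through the shape invariant sidesteps this bookkeeping, which is why I would not attempt a direct component-by-component construction of the bijection.
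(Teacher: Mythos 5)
Your proposal is correct, and it uses the same underlying bijection as the paper: both send $[\mathcal{S}]_{\wasylozenge}$ to the $\sim$-class of the pair $\big(\lambda(\mathcal{S}),\mu(\mathcal{S})\big)$ coming from Definition \ref{definitionMS} and then invoke Theorem \ref{Cbasis}. Where you differ is in how bijectivity is certified. The paper runs three separate direct arguments (well-definedness, injectivity, surjectivity), in each case explicitly converting the permutations $\alpha,\beta\in S_{k}$ witnessing $\mathcal{S}\wasylozenge\mathcal{T}$ into a permutation $\sigma\in S_{n}$ witnessing $\sim$ (and back), by reasoning about rearrangements and relabellings of the rows of partition diagrams; this is exactly the bookkeeping about canonical component orderings that you flag as delicate. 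You instead factor both equivalence relations through one complete invariant --- the multiset of pairs $\big(|B\cap\{1,\ldots,k\}|,\,|B\cap\{1',\ldots,k'\}|\big)$ over blocks $B$ on the set-partition side, and the multiset of nonzero pairs $\big(m_{v}(\nu),m_{v}(\gamma)\big)$ on the partition-pair side --- and check that the map identifies the two invariants. That packaging makes all three verifications essentially one multiset computation, and it isolates cleanly the one place where $2k\le n$ is truly needed, namely that a set-partition in $\text{SP}(k,k')$ has at most $2k$ blocks, so the component labels land in $\{1,\ldots,n\}$ and the resulting partitions satisfy $\ell\big(\lambda^{\text{T}}\big)\le n$. (Your parenthetical that $2k\le n$ is also needed for surjectivity is harmless but not quite right: a multiset of nonzero pairs summing to $(k,k)$ automatically has at most $2k$ members, so realizability by a set-partition is unconditional; the hypothesis is only needed so that every $\wasylozenge$-class actually has an image in the index set.)
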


\begin{proof}
 Our strategy is to construct a bijection $\varphi\colon \text{SP}(k, k')/ \wasylozenge \to
 \{ T^{\mathcal{C}} \}_{\mathcal{C}}$, 
 for the family $ \{ T^{\mathcal{C}} \}_{\mathcal{C}}$ 
 satisfying the conditions specified in Theorem \ref{Cbasis}. 

 Letting $\mathcal{S}$ denote an element in $\text{SP}(k, k')$, 
 we set 
\begin{equation}\label{202406011011AM1A}
 \varphi\left( [ \mathcal{S} ]_{\wasylozenge} \right) = \left[ T^{\lambda(\mathcal{S})}_{\mu(\mathcal{S})} \right]_{\sim}. 
\end{equation}
 Let $\mathcal{S}$ and $\mathcal{T}$ be elements in $\text{SP}(k, k')$ such that $[\mathcal{S}]_{\wasylozenge} = [ \mathcal{T}]_{\wasylozenge}$. 
 Since $\mathcal{S} \wasylozenge \mathcal{T}$, if we take graphs $G_{\mathcal{S}}$ and $G_{\mathcal{T}}$ in the equivalence classes given, 
 respectively, by the partition diagrams for $\mathcal{S}$ and $\mathcal{T}$, 
 the graph $G_{\mathcal{T}}$ can be obtained from 
 $G_{\mathcal{S}}$ by rearranging the ordering of the the top (resp.\ bottom) vertices of $G_{\mathcal{S}}$ 
 while preserving adjacent vertices being adjacent and preserving
 non-adjacent vertices being non-adjacent. 
 So, since $2k \leq n$, there exists a permutation $\sigma \in S_{n}$ such that 
 the entries in the top (resp.\ bottom) row of $M(\mathcal{T})$ 
 are given by a possible rearrangement of the ordering of the entries 
 of the tuple given by applying $\sigma$ to the top (resp.\ bottom) 
 row of $M(\mathcal{S})$ . 
 In other words, we have that 
 the same permutation $\sigma \in S_{n}$ 
 satisfies $\lambda(\mathcal{S}) = \sigma \lambda(\mathcal{T})$ 
 and $\mu(\mathcal{S}) = \sigma \mu(\mathcal{T})$. 
 Since $\mathcal{S}, \mathcal{T} \in \text{SP}(k, k')$, 
 we have that $\ell(\lambda(\mathcal{S})) = \ell(\lambda(\mathcal{T})) = \ell(\mu(\mathcal{S}))
 = \ell(\mu(\mathcal{T})) = k$. Since $2k \leq n$, 
 we have that the entries of $\lambda(\mathcal{S})$, $\lambda(\mathcal{T})$, $\mu(\mathcal{S})$, 
 and $\mu(\mathcal{T})$ 
 are all less than or equal to $n$. 
 So, from Definition \ref{pairsim}, 
 we have that 
 $(\lambda(\mathcal{S}), \mu(\mathcal{S})) \sim (\lambda(\mathcal{T}), \mu(\mathcal{T}))$. 
 This gives us that $T^{\lambda(\mathcal{S})}_{\mu(\mathcal{S})} \sim T^{\lambda(\mathcal{T})}_{\mu(\mathcal{T})}$. 
 This gives us that $\varphi$ is well defined, according to the definition in \eqref{202406011011AM1A}. 

 Let $\mathcal{S}$ and $\mathcal{T}$ be members of $\text{SP}(k, k')$. 
 Suppose that $\varphi( [ \mathcal{S} ]_{\wasylozenge} ) 
 = \varphi( [ \mathcal{T} ]_{\wasylozenge} )$. 
 So, we have that 
 $\big[ T^{\lambda(\mathcal{S})}_{\mu(\mathcal{S})} \big]_{\sim} = 
 \big[ T^{\lambda(\mathcal{T})}_{\mu(\mathcal{T})} \big]_{\sim}$. 
 This gives us that 
 $ T^{\lambda(\mathcal{S})}_{\mu(\mathcal{S})} \sim T^{\lambda(\mathcal{T})}_{\mu(\mathcal{T})}$. 
 In turn, we obtain 
 $(\lambda(\mathcal{S}), \mu(\mathcal{S})) \sim (\lambda(\mathcal{T}), \mu(\mathcal{T}))$. 
 So, if we compare the respective entries of the matrices 
 $M(\mathcal{S})$ and $M(\mathcal{T})$, 
 we find that there is a permutation $\sigma \in S_{n}$ such that 
 $M(\mathcal{S})$ is equivalent to $ \sigma M(\mathcal{T})$, 
 up to rearrangements of the entries in the top row 
 and possibly different rearrangements of the entries in the bottom row. 
 So, by identifying $\mathcal{S}$ (resp.\ $\mathcal{T}$) 
 with a graph $G_{\mathcal{S}}$ (resp.\ $G_{\mathcal{T}}$) in the partition diagram for $\mathcal{S}$
 (resp.\ $\mathcal{T}$), 
 by labeling the vertices in $G_{\mathcal{S}}$ (resp.\ $G_{\mathcal{T}}$)
 according to the entries of $M(G_{\mathcal{S}})$ (resp.\ $M(G_{\mathcal{T}})$), 
 we find that $G_{\mathcal{S}}$ is obtained by relabelling $G_{\mathcal{T}}$
 (while maintaining the property that vertices in the same component have the same labels and while maintaining
 that different components have different labels) 
 and by then rearranging the ordering of the vertices in the top row
 and by then rearranging the ordering of the vertices in the bottow row. 
 By identifying these vertex order rearrangements with the permutations $\alpha \in S_{k}$ 
 and $\beta \in S_{k}$ in Definition \ref{definitionlozenge}, 
 we find that $\mathcal{S} \wasylozenge \mathcal{T}$ 
 since the components of $\mathcal{S}$ and $\mathcal{T}$ 
 are preserved according to the given relabelling. This gives us that $\varphi$ is injective. 

 Let $\nu$ and $\gamma$ be integer partitions such that $\ell(\nu) = \ell(\gamma) = k$ and $\ell\left( \nu^{\text{T}} \right) \leq n$ and $\ell\left( 
 \gamma^{\text{T}} \right) \leq n$. So, the equivalence class $\big[ T^{\nu}_{\gamma} \big]_{\sim}$ is an arbitrary element in the specified codomain of 
 $\varphi$. We construct a set-partition $\mathcal{S} \in \text{SP}(k, k')$ in the following manner. For a positive integer $i \leq n$, we let $ 
 \text{upper}_{i}$ and $\text{lower}_{i}$ respectively denote the number of copies of $i$ in $\nu$ and $\gamma$. We then let $\{ 1, 2, \ldots, 
 \text{upper}_{1} \} \cup \{ 1', 2', \ldots, (\text{lower}_{1})' \}$ be a component in $\mathcal{S}$, and we then let $\{ \text{upper}_{1}, 
 \text{upper}_{1} + 1, \ldots, \text{upper}_{1} + \text{upper}_{2} \} \cup \{ (\text{lower}_{1})', (\text{lower}_{1} + 1)', \ldots, (\text{lower}_{1} + 
 \text{lower}_{2})' \}$ be a component in $\mathcal{S}$, and so forth. Following the procedure in Definition \ref{definitionMS}, 
 we find that $M(\mathcal{S})$ is such that its upper row, up to a rearrangement of entries, 
 is $\nu$ and that its lower row, up to a possibly different rearrangement, is $\gamma$. 
 We thus find that $\varphi( [\mathcal{S}]_{\wasylozenge} ) = \big[ T^{\nu}_{\gamma} \big]_{\sim}$. 
 We thus have that $\varphi$ is surjective. 
\end{proof}

 Our main result is highlighted as the below Theorem.
 To begin with, we introduce some notation required for this Theorem. 
 For a set $S$ of natural numbers, we let $S'$ denote the set obtained from $S$ 
 by ``priming'' the elements in $S$. 
 A \emph{perfect matching} of a finite set $S$ is a set-partition of $S$ such that every 
 element in this set-partition is of cardinality $2$. 
 For two finite and disjoint sets $S$ and $T$, we 
 let $ \text{match}(S, T) $ denote the set of all set-partitions $P = P(S, T)$ of $S \cup T$ 
 such that each element in $P$ is either a single set or a set of cardinality $2$ 
 and such that any $2$-element est $Q$ in $P$ 
 contains exactly one element in $S$ and exactly one element in $T$. 
 A set-partition $P = (S, T)$ in $\text{match}(S, T)$ may be identified with 
 a bipartite graph $ G_{P(S, T)} = G(S, T, E_{P})$, where the edge set $E = E_{P} = E_{P(S, T)}$ consists of the $2$-sets 
 in $P$. For an edge $e \in E_{P(S, T)}$, 
 we write $e = \{ e_{S}, e_{T} \}$, 
 letting $e_{S}$ and $e_{T}$ respectively 
 denote the unique elements in $e \cap S$ 
 and $e \cap T$. 

\begin{theorem}\label{dimensiontheorem}
 The dimension of $\text{End}_{S_{n}}\big( V^{\boxtimes k} \big)$ is equal to 
 \begin{equation}\label{mainformula}
 \sum_{\substack{ (\lambda, \mu) \in \mathcal{P}^{2} \\ \lambda, \mu \vdash k \\ 
 P \in \text{match}(\text{distinct}(\lambda), \text{distinct}(\mu)') }} 
 \prod_{e \in E_{P}} \min\{ m_{e_{\text{distinct}(\lambda)}}(\lambda), m_{e_{\text{distinct}(\mu)}}(\mu) \} 
\end{equation} 
 for each positive integer $k$, provided that $2 k \leq n$. 
\end{theorem}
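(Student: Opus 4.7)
The plan is to combine Theorem \ref{theoremfirstdimension}, which identifies $\dim \text{End}_{S_{n}}(V^{\boxtimes k})$ with $|\text{SP}(k,k')/\wasylozenge|$ under the hypothesis $2k \leq n$, with a careful parametrization of the $\wasylozenge$-equivalence classes by triples $(\lambda, \mu, P)$, so as to read off the sum in \eqref{mainformula} as a fiber count. Given the reduction from Theorem \ref{theoremfirstdimension}, it suffices to count the equivalence classes in $\text{SP}(k, k')/\wasylozenge$ by the right-hand side of \eqref{mainformula}.

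For the parametrization, I would attach to each $\mathcal{S} \in \text{SP}(k,k')$ and each block $B$ of $\mathcal{S}$ the pair $(a_{B}, b_{B}) = (|B \cap \{1,\ldots,k\}|, |B \cap \{1',\ldots,k'\}|)$. Since $\wasylozenge$ permutes unprimed and primed indices independently while preserving block membership, the multiset $\{(a_{B}, b_{B})\}_{B}$ (with the forbidden pair $(0,0)$ omitted) is a complete invariant of $[\mathcal{S}]_{\wasylozenge}$. Sorting the positive $a_{B}$ values yields $\lambda(\mathcal{S}) \vdash k$ and the positive $b_{B}$ values yield $\mu(\mathcal{S}) \vdash k$, compatibly with the construction used in Theorem \ref{theoremfirstdimension}. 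Classifying each block as pure-top ($b_{B} = 0$), pure-bottom ($a_{B} = 0$), or merged ($a_{B}, b_{B} \geq 1$), the pure blocks are determined by $(\lambda, \mu)$ once the merged blocks are specified, so the equivalence class is encoded by $(\lambda, \mu)$ together with the multiset of merged-block shapes. From this multiset, I would extract the matching $P \in \text{match}(\text{distinct}(\lambda), \text{distinct}(\mu)')$ whose edge set is $\{\{a, b'\} : \text{some merged block has shape }(a, b)\}$.

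For each edge $e = \{a, b'\} \in E_{P}$, the number $n_{a, b}$ of merged blocks of shape $(a, b)$ is a positive integer in $\{1, 2, \ldots, \min\{m_{a}(\lambda), m_{b}(\mu)\}\}$, and these choices should be independent across distinct edges, giving the fiber count $\prod_{e \in E_{P}} \min\{m_{e_{\text{distinct}(\lambda)}}(\lambda), m_{e_{\text{distinct}(\mu)}}(\mu)\}$ matching the summand of \eqref{mainformula}. Summing over the triples $(\lambda, \mu, P)$ then produces the stated formula. The main obstacle I anticipate is verifying that the edge set built from the merged-block multiset is a genuine matching in $\text{match}(\text{distinct}(\lambda), \text{distinct}(\mu)')$, i.e., that every distinct top-size appears among the merged-block shapes with at most one distinct bottom-partner, and symmetrically on the bottom side; this is the point where the hypothesis $2k \leq n$ and the interaction between the row and column constraints on the multiplicities $n_{a, b}$ must be pinned down precisely, since without such control the bipartite support could in principle overflow the structure allowed by $\text{match}$, and this is the single combinatorial step that requires care beyond routine block-shape bookkeeping.
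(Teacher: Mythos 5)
Your reduction to counting $\text{SP}(k,k')/\wasylozenge$ via Theorem \ref{theoremfirstdimension}, and your identification of the multiset of block shapes $(a_{B},b_{B})$ as a complete invariant of $[\mathcal{S}]_{\wasylozenge}$, are both correct and follow the paper's strategy. But the obstacle you flagged at the end is not a technicality to be pinned down: it is where the argument genuinely fails, and neither the hypothesis $2k\leq n$ nor any interaction between the row and column constraints rescues it. The bipartite support of the merged-block multiset need not be a matching. Take $k=3$ and $\mathcal{S}=\{\{1,2,1'\},\{3,2'\},\{3'\}\}$: here $\lambda=(2,1)$, $\mu=(1,1,1)$, and the merged blocks have shapes $(2,1)$ and $(1,1)$, so the edge set you would extract is $\{\{2,1'\},\{1,1'\}\}$, in which the bottom vertex $1'$ has degree two. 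This class is therefore not reachable from any triple $(\lambda,\mu,P)$ with $P\in\text{match}(\text{distinct}(\lambda),\text{distinct}(\mu)')$, yet it is a genuine $\wasylozenge$-class, distinct from the three that the matching formalism does produce for this $(\lambda,\mu)$ (those with merged-shape multisets $\emptyset$, $\{(2,1)\}$, and $\{(1,1)\}$).

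Consequently no completion of your argument can succeed, because the statement itself is false as written. Your own invariant yields the correct count: a $\wasylozenge$-class with data $(\lambda,\mu)$ is determined by the numbers $n_{a,b}\geq 0$ of merged blocks of shape $(a,b)$, subject only to $\sum_{b}n_{a,b}\leq m_{a}(\lambda)$ for each distinct part $a$ of $\lambda$ and $\sum_{a}n_{a,b}\leq m_{b}(\mu)$ for each distinct part $b$ of $\mu$; that is, the fiber over $(\lambda,\mu)$ is the set of nonnegative integer matrices with row sums bounded by the part multiplicities of $\lambda$ and column sums bounded by those of $\mu$, not merely the matrices supported on a partial matching. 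For $k=3$ this gives $2+3+2+3+7+4+2+4+4=31$ classes, whereas \eqref{mainformula} gives $29$, returning $3$ instead of $4$ for each of $(\lambda,\mu)=((2,1),(1,1,1))$ and $((1,1,1),(2,1))$; the enumeration preceding \eqref{notoeis} and the example following Theorem \ref{dimensiontheorem} omit exactly these two classes. (For $k\leq 2$ the support of the merged-block multiset is forced to be a matching, which is why the formula agrees with the dimension $9$ in Example \ref{dimension9example}; the discrepancy first appears at $k=3$.) The correct statement replaces the sum over matchings and the product of minima by the matrix count just described.
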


\begin{proof}
 We take a set-partition $P = P(\text{distinct}(\lambda), \text{distinct}(\mu)')$ in $$ \text{match}(\text{distinct}(\lambda), \text{distinct}(\mu)') $$ and, as 
 above, we identify $P$ with the bipartite graph $$G_{P} = G(\text{distinct}(\lambda), \text{distinct}(\mu)', E_{P}),$$ by identifying $\lambda \vdash k$ 
 with a partitioning of the vertices $1 < 2 < \cdots < k$ into connected components arranged in non-increasing order according to size, and similarly for 
 $ \mu \vdash k$, with respect to vertices $1' < 2' < \cdots < k'$ arranged into a bottom row. In this regard, for any edge $e \in E_{P}$, 
 this can be thought of, informally, as indicating that at least one of the upper components 
 of size equal to 
 equal to $e_{\text{distinct}(\lambda)}$ is to be joined with at least one of the bottom 
 components of size equal to $e_{\text{distinct}(\mu)}$. 

 By the invariance property given by equivalence according to $\wasylozenge$, partition diagrams are equivalent up to rearrangements of the upper vertices 
 and possibly different rearrangements of the lower vertices, without changing the connected components. So, if there is a component consisting of $u$ 
 upper vertices and $l$ lower vertices, and a different component that also consists of $u$ upper vertices and $l$ lower vertices, then these components 
 are equivalent in terms of rearrangements of vertices of the upper row and possibly different rearrangements of vertices in the lower row. So, for $e \in 
 E_{P}$ as above, there are 
\begin{equation}\label{minmultiplicity}
 \min\{ m_{e_{\text{distinct}(\lambda)}}(\lambda), m_{e_{\text{distinct}(\mu)}}(\mu) \} 
\end{equation}
 possibilities to consider: We could 
 take one out of the $ m_{e_{\text{distinct}(\lambda)}}(\lambda)$ copies of upper components of size $e_{\text{distinct}(\lambda)}$ and join 
 it with one 
 out of the $ m_{e_{\text{distinct}(\mu)}}(\mu)$ copies of lower components of size $e_{\text{distinct}(\mu)}$, but, by taking, if possible, 
 another one out of the upper components of the specified form and another one of the lower components of the specified form and joining these upper 
 and lower components, this is indistinguishable from the previously formed component, up to rearrangements of upper vertices and possibly different 
 rearrangements of lower vertices. 

 The foregoing considerations give rise to a bijection, via Theorem \ref{theoremfirstdimension}. For a set-partition $\mathcal{S}$ in $\text{SP}(k, k')$, 
 we fix the element from $[\mathcal{S}]_{\wasylozenge}$ obtained by replacing each component of a graph in the partition diagram of $\mathcal{S}$ 
 with a complete graph, and then forming a partition of the upper (resp.\ lower) rows by removing any edges in between the rows, and by then sorting the 
 resultant partitions according to the non-increasing size of the parts, and by then permuting the upper and lower rows of a graph in the partition 
 diagram for $\mathcal{S}$ according to the same rearrangements. This gives us a bijection between $\text{SP}(k, k') / \wasylozenge$ and the 
 set of equivalence class representatives obtained in the manner specified: The elements in this latter set are uniquely identified by the integer 
 partitions $\lambda$ and $\mu$ of $k$ obtained by sorting the parts of the upper and lower rows after removing any propagating edges, together 
 with any edges incident with upper and lower parts, up to equivalence whereby two edges incident with distinct upper parts of the same size $s_{1}$ 
 and incident with distinct lower parts of a given size $s_{2}$ 
 give rise to equivalent diagrams, according to $\wasylozenge$. 
 So, by taking all possible pairs $(\lambda, \mu)$ of integer partitions such that $\lambda \vdash k$ and $\mu \vdash k$, 
 and by taking all possible bipartite graphs
 of the form $\text{match}(\text{distinct}(\lambda), \text{distinct}(\mu)')$, 
 by then summing over products of the form indicated in \eqref{mainformula}, 
 the different possibilities given by the minimal value in \eqref{minmultiplicity} 
 give us the number of equivalence classes with respect to $\wasylozenge$. 
\end{proof}

\begin{example}
 Let $k = 3$ and let $2k \leq n$. According to the bijective proof of Theorem \ref{theoremfirstdimension}, 
 we may identify a given basis element in $\text{End}_{S_{n}}\big( V^{\boxtimes k} \big)$ 
 with an equivalence class with respect to $\wasylozenge$, 
 and hence our below notation for basis elements, arranged below according to 
 pairs of the form $(\lambda, \mu)$ involved in the index set in \eqref{mainformula}

 \ 

 \ 

\noindent $\lambda = (3)$, $\mu = (3)$: \ 

\noindent $ \left[ \begin{tikzpicture}[scale = 0.5,thick, baseline={(0,-1ex/2)}] 
\tikzstyle{vertex} = [shape = circle, minimum size = 7pt, inner sep = 1pt] 
\node[vertex] (G--3) at (3.0, -1) [shape = circle, draw] {}; 
\node[vertex] (G--2) at (1.5, -1) [shape = circle, draw] {}; 
\node[vertex] (G--1) at (0.0, -1) [shape = circle, draw] {}; 
\node[vertex] (G-1) at (0.0, 1) [shape = circle, draw] {}; 
\node[vertex] (G-2) at (1.5, 1) [shape = circle, draw] {}; 
\node[vertex] (G-3) at (3.0, 1) [shape = circle, draw] {}; 
\draw[] (G-1) .. controls +(0.5, -0.5) and +(-0.5, -0.5) .. (G-2); 
\draw[] (G-2) .. controls +(0.5, -0.5) and +(-0.5, -0.5) .. (G-3); 
\draw[] (G-3) .. controls +(0, -1) and +(0, 1) .. (G--3); 
\draw[] (G--3) .. controls +(-0.5, 0.5) and +(0.5, 0.5) .. (G--2); 
\draw[] (G--2) .. controls +(-0.5, 0.5) and +(0.5, 0.5) .. (G--1); 
\draw[] (G--1) .. controls +(0, 1) and +(0, -1) .. (G-1); 
 \end{tikzpicture} \right]_{\wasylozenge} $
  \ \ \  $ \left[ \begin{tikzpicture}[scale = 0.5,thick, baseline={(0,-1ex/2)}] 
\tikzstyle{vertex} = [shape = circle, minimum size = 7pt, inner sep = 1pt] 
\node[vertex] (G--3) at (3.0, -1) [shape = circle, draw] {}; 
\node[vertex] (G--2) at (1.5, -1) [shape = circle, draw] {}; 
\node[vertex] (G--1) at (0.0, -1) [shape = circle, draw] {}; 
\node[vertex] (G-1) at (0.0, 1) [shape = circle, draw] {}; 
\node[vertex] (G-2) at (1.5, 1) [shape = circle, draw] {}; 
\node[vertex] (G-3) at (3.0, 1) [shape = circle, draw] {}; 
\draw[] (G--3) .. controls +(-0.5, 0.5) and +(0.5, 0.5) .. (G--2); 
\draw[] (G--2) .. controls +(-0.5, 0.5) and +(0.5, 0.5) .. (G--1); 
\draw[] (G-1) .. controls +(0.5, -0.5) and +(-0.5, -0.5) .. (G-2); 
\draw[] (G-2) .. controls +(0.5, -0.5) and +(-0.5, -0.5) .. (G-3); 
\end{tikzpicture} \right]_{\wasylozenge} $

 \ 

 \ 

\noindent $\lambda = (3)$, $\mu = (2, 1)$: \ 

\noindent $ \left[ \begin{tikzpicture}[scale = 0.5,thick, baseline={(0,-1ex/2)}] 
\tikzstyle{vertex} = [shape = circle, minimum size = 7pt, inner sep = 1pt] 
\node[vertex] (G--3) at (3.0, -1) [shape = circle, draw] {}; 
\node[vertex] (G--2) at (1.5, -1) [shape = circle, draw] {}; 
\node[vertex] (G--1) at (0.0, -1) [shape = circle, draw] {}; 
\node[vertex] (G-1) at (0.0, 1) [shape = circle, draw] {}; 
\node[vertex] (G-2) at (1.5, 1) [shape = circle, draw] {}; 
\node[vertex] (G-3) at (3.0, 1) [shape = circle, draw] {}; 
\draw[] (G-1) .. controls +(0.5, -0.5) and +(-0.5, -0.5) .. (G-2); 
\draw[] (G-2) .. controls +(0.5, -0.5) and +(-0.5, -0.5) .. (G-3); 
\draw[] (G-3) .. controls +(-0.75, -1) and +(0.75, 1) .. (G--2); 
\draw[] (G--2) .. controls +(-0.5, 0.5) and +(0.5, 0.5) .. (G--1); 
\draw[] (G--1) .. controls +(0, 1) and +(0, -1) .. (G-1); 
\end{tikzpicture} \right]_{\wasylozenge} $
 \ \ \  
 $ \left[ \begin{tikzpicture}[scale = 0.5,thick, baseline={(0,-1ex/2)}] 
\tikzstyle{vertex} = [shape = circle, minimum size = 7pt, inner sep = 1pt] 
\node[vertex] (G--3) at (3.0, -1) [shape = circle, draw] {}; 
\node[vertex] (G-1) at (0.0, 1) [shape = circle, draw] {}; 
\node[vertex] (G-2) at (1.5, 1) [shape = circle, draw] {}; 
\node[vertex] (G-3) at (3.0, 1) [shape = circle, draw] {}; 
\node[vertex] (G--2) at (1.5, -1) [shape = circle, draw] {}; 
\node[vertex] (G--1) at (0.0, -1) [shape = circle, draw] {}; 
\draw[] (G-1) .. controls +(0.5, -0.5) and +(-0.5, -0.5) .. (G-2); 
\draw[] (G-2) .. controls +(0.5, -0.5) and +(-0.5, -0.5) .. (G-3); 
\draw[] (G-3) .. controls +(0, -1) and +(0, 1) .. (G--3); 
\draw[] (G--3) .. controls +(-1, 1) and +(1, -1) .. (G-1); 
\draw[] (G--2) .. controls +(-0.5, 0.5) and +(0.5, 0.5) .. (G--1); 
\end{tikzpicture} \right]_{\wasylozenge} $
 \ \ \  
 $ \left[ \begin{tikzpicture}[scale = 0.5,thick, baseline={(0,-1ex/2)}] 
\tikzstyle{vertex} = [shape = circle, minimum size = 7pt, inner sep = 1pt] 
\node[vertex] (G--3) at (3.0, -1) [shape = circle, draw] {}; 
\node[vertex] (G--2) at (1.5, -1) [shape = circle, draw] {}; 
\node[vertex] (G--1) at (0.0, -1) [shape = circle, draw] {}; 
\node[vertex] (G-1) at (0.0, 1) [shape = circle, draw] {}; 
\node[vertex] (G-2) at (1.5, 1) [shape = circle, draw] {}; 
\node[vertex] (G-3) at (3.0, 1) [shape = circle, draw] {}; 
\draw[] (G--2) .. controls +(-0.5, 0.5) and +(0.5, 0.5) .. (G--1); 
\draw[] (G-1) .. controls +(0.5, -0.5) and +(-0.5, -0.5) .. (G-2); 
\draw[] (G-2) .. controls +(0.5, -0.5) and +(-0.5, -0.5) .. (G-3); 
\end{tikzpicture} \right]_{\wasylozenge} $ 

 \ 

 \ 

\noindent $\lambda = (3)$, $\mu = (1, 1, 1)$: \ 

\noindent $ \left[ \begin{tikzpicture}[scale = 0.5,thick, baseline={(0,-1ex/2)}] 
\tikzstyle{vertex} = [shape = circle, minimum size = 7pt, inner sep = 1pt] 
\node[vertex] (G--3) at (3.0, -1) [shape = circle, draw] {}; 
\node[vertex] (G--2) at (1.5, -1) [shape = circle, draw] {}; 
\node[vertex] (G--1) at (0.0, -1) [shape = circle, draw] {}; 
\node[vertex] (G-1) at (0.0, 1) [shape = circle, draw] {}; 
\node[vertex] (G-2) at (1.5, 1) [shape = circle, draw] {}; 
\node[vertex] (G-3) at (3.0, 1) [shape = circle, draw] {}; 
\draw[] (G-1) .. controls +(0.5, -0.5) and +(-0.5, -0.5) .. (G-2); 
\draw[] (G-2) .. controls +(0.5, -0.5) and +(-0.5, -0.5) .. (G-3); 
\draw[] (G-3) .. controls +(-1, -1) and +(1, 1) .. (G--1); 
\draw[] (G--1) .. controls +(0, 1) and +(0, -1) .. (G-1); 
\end{tikzpicture} \right]_{\wasylozenge} $ 
 \ \ \  
 $ \left[ \begin{tikzpicture}[scale = 0.5,thick, baseline={(0,-1ex/2)}] 
\tikzstyle{vertex} = [shape = circle, minimum size = 7pt, inner sep = 1pt] 
\node[vertex] (G--3) at (3.0, -1) [shape = circle, draw] {}; 
\node[vertex] (G--2) at (1.5, -1) [shape = circle, draw] {}; 
\node[vertex] (G--1) at (0.0, -1) [shape = circle, draw] {}; 
\node[vertex] (G-1) at (0.0, 1) [shape = circle, draw] {}; 
\node[vertex] (G-2) at (1.5, 1) [shape = circle, draw] {}; 
\node[vertex] (G-3) at (3.0, 1) [shape = circle, draw] {}; 
\draw[] (G-1) .. controls +(0.5, -0.5) and +(-0.5, -0.5) .. (G-2); 
\draw[] (G-2) .. controls +(0.5, -0.5) and +(-0.5, -0.5) .. (G-3); 
\end{tikzpicture} \right]_{\wasylozenge} $ 

 \ 

 \ 

\noindent $\lambda = (2, 1)$, $\mu = (3)$: \ 

\noindent $ \left[ \begin{tikzpicture}[scale = 0.5,thick, baseline={(0,-1ex/2)}] 
\tikzstyle{vertex} = [shape = circle, minimum size = 7pt, inner sep = 1pt] 
\node[vertex] (G--3) at (3.0, -1) [shape = circle, draw] {}; 
\node[vertex] (G--2) at (1.5, -1) [shape = circle, draw] {}; 
\node[vertex] (G--1) at (0.0, -1) [shape = circle, draw] {}; 
\node[vertex] (G-1) at (0.0, 1) [shape = circle, draw] {}; 
\node[vertex] (G-2) at (1.5, 1) [shape = circle, draw] {}; 
\node[vertex] (G-3) at (3.0, 1) [shape = circle, draw] {}; 
\draw[] (G-1) .. controls +(0.5, -0.5) and +(-0.5, -0.5) .. (G-2); 
\draw[] (G-2) .. controls +(0.75, -1) and +(-0.75, 1) .. (G--3); 
\draw[] (G--3) .. controls +(-0.5, 0.5) and +(0.5, 0.5) .. (G--2); 
\draw[] (G--2) .. controls +(-0.5, 0.5) and +(0.5, 0.5) .. (G--1); 
\draw[] (G--1) .. controls +(0, 1) and +(0, -1) .. (G-1); 
\end{tikzpicture} \right]_{\wasylozenge} $ 
 \ \ \  
 $ \left[ \begin{tikzpicture}[scale = 0.5,thick, baseline={(0,-1ex/2)}] 
\tikzstyle{vertex} = [shape = circle, minimum size = 7pt, inner sep = 1pt] 
\node[vertex] (G--3) at (3.0, -1) [shape = circle, draw] {}; 
\node[vertex] (G--2) at (1.5, -1) [shape = circle, draw] {}; 
\node[vertex] (G--1) at (0.0, -1) [shape = circle, draw] {}; 
\node[vertex] (G-3) at (3.0, 1) [shape = circle, draw] {}; 
\node[vertex] (G-1) at (0.0, 1) [shape = circle, draw] {}; 
\node[vertex] (G-2) at (1.5, 1) [shape = circle, draw] {}; 
\draw[] (G-3) .. controls +(0, -1) and +(0, 1) .. (G--3); 
\draw[] (G--3) .. controls +(-0.5, 0.5) and +(0.5, 0.5) .. (G--2); 
\draw[] (G--2) .. controls +(-0.5, 0.5) and +(0.5, 0.5) .. (G--1); 
\draw[] (G--1) .. controls +(1, 1) and +(-1, -1) .. (G-3); 
\draw[] (G-1) .. controls +(0.5, -0.5) and +(-0.5, -0.5) .. (G-2); 
\end{tikzpicture} \right]_{\wasylozenge} $
 \ \ \  
 $ \left[ \begin{tikzpicture}[scale = 0.5,thick, baseline={(0,-1ex/2)}] 
\tikzstyle{vertex} = [shape = circle, minimum size = 7pt, inner sep = 1pt] 
\node[vertex] (G--3) at (3.0, -1) [shape = circle, draw] {}; 
\node[vertex] (G--2) at (1.5, -1) [shape = circle, draw] {}; 
\node[vertex] (G--1) at (0.0, -1) [shape = circle, draw] {}; 
\node[vertex] (G-1) at (0.0, 1) [shape = circle, draw] {}; 
\node[vertex] (G-2) at (1.5, 1) [shape = circle, draw] {}; 
\node[vertex] (G-3) at (3.0, 1) [shape = circle, draw] {}; 
\draw[] (G--3) .. controls +(-0.5, 0.5) and +(0.5, 0.5) .. (G--2); 
\draw[] (G--2) .. controls +(-0.5, 0.5) and +(0.5, 0.5) .. (G--1); 
\draw[] (G-1) .. controls +(0.5, -0.5) and +(-0.5, -0.5) .. (G-2); 
\end{tikzpicture} \right]_{\wasylozenge} $ 

 \ 

 \ 

\noindent $\lambda = (2, 1)$, $\mu = (2, 1)$: \ 

\noindent $ \left[ \begin{tikzpicture}[scale = 0.5,thick, baseline={(0,-1ex/2)}] 
\tikzstyle{vertex} = [shape = circle, minimum size = 7pt, inner sep = 1pt] 
\node[vertex] (G--3) at (3.0, -1) [shape = circle, draw] {}; 
\node[vertex] (G-1) at (0.0, 1) [shape = circle, draw] {}; 
\node[vertex] (G-2) at (1.5, 1) [shape = circle, draw] {}; 
\node[vertex] (G--2) at (1.5, -1) [shape = circle, draw] {}; 
\node[vertex] (G--1) at (0.0, -1) [shape = circle, draw] {}; 
\node[vertex] (G-3) at (3.0, 1) [shape = circle, draw] {}; 
\draw[] (G-1) .. controls +(0.5, -0.5) and +(-0.5, -0.5) .. (G-2); 
\draw[] (G-2) .. controls +(0.75, -1) and +(-0.75, 1) .. (G--3); 
\draw[] (G--3) .. controls +(-1, 1) and +(1, -1) .. (G-1); 
\draw[] (G-3) .. controls +(-0.75, -1) and +(0.75, 1) .. (G--2); 
\draw[] (G--2) .. controls +(-0.5, 0.5) and +(0.5, 0.5) .. (G--1); 
\draw[] (G--1) .. controls +(1, 1) and +(-1, -1) .. (G-3); 
\end{tikzpicture} \right]_{\wasylozenge} $
 \ \ \  
 $ \left[ \begin{tikzpicture}[scale = 0.5,thick, baseline={(0,-1ex/2)}] 
\tikzstyle{vertex} = [shape = circle, minimum size = 7pt, inner sep = 1pt] 
\node[vertex] (G--3) at (3.0, -1) [shape = circle, draw] {}; 
\node[vertex] (G-3) at (3.0, 1) [shape = circle, draw] {}; 
\node[vertex] (G--2) at (1.5, -1) [shape = circle, draw] {}; 
\node[vertex] (G--1) at (0.0, -1) [shape = circle, draw] {}; 
\node[vertex] (G-1) at (0.0, 1) [shape = circle, draw] {}; 
\node[vertex] (G-2) at (1.5, 1) [shape = circle, draw] {}; 
\draw[] (G-3) .. controls +(0, -1) and +(0, 1) .. (G--3); 
\draw[] (G-1) .. controls +(0.5, -0.5) and +(-0.5, -0.5) .. (G-2); 
\draw[] (G-2) .. controls +(0, -1) and +(0, 1) .. (G--2); 
\draw[] (G--2) .. controls +(-0.5, 0.5) and +(0.5, 0.5) .. (G--1); 
\draw[] (G--1) .. controls +(0, 1) and +(0, -1) .. (G-1); 
\end{tikzpicture} \right]_{\wasylozenge} $ 
 \ \ \  
 $ \left[ \begin{tikzpicture}[scale = 0.5,thick, baseline={(0,-1ex/2)}] 
\tikzstyle{vertex} = [shape = circle, minimum size = 7pt, inner sep = 1pt] 
\node[vertex] (G--3) at (3.0, -1) [shape = circle, draw] {}; 
\node[vertex] (G--2) at (1.5, -1) [shape = circle, draw] {}; 
\node[vertex] (G--1) at (0.0, -1) [shape = circle, draw] {}; 
\node[vertex] (G-3) at (3.0, 1) [shape = circle, draw] {}; 
\node[vertex] (G-1) at (0.0, 1) [shape = circle, draw] {}; 
\node[vertex] (G-2) at (1.5, 1) [shape = circle, draw] {}; 
\draw[] (G-3) .. controls +(-0.75, -1) and +(0.75, 1) .. (G--2); 
\draw[] (G--2) .. controls +(-0.5, 0.5) and +(0.5, 0.5) .. (G--1); 
\draw[] (G--1) .. controls +(1, 1) and +(-1, -1) .. (G-3); 
\draw[] (G-1) .. controls +(0.5, -0.5) and +(-0.5, -0.5) .. (G-2); 
\end{tikzpicture} \right]_{\wasylozenge} $ 
 \ \ \  
 $ \left[ \begin{tikzpicture}[scale = 0.5,thick, baseline={(0,-1ex/2)}] 
\tikzstyle{vertex} = [shape = circle, minimum size = 7pt, inner sep = 1pt] 
\node[vertex] (G--3) at (3.0, -1) [shape = circle, draw] {}; 
\node[vertex] (G-3) at (3.0, 1) [shape = circle, draw] {}; 
\node[vertex] (G--2) at (1.5, -1) [shape = circle, draw] {}; 
\node[vertex] (G--1) at (0.0, -1) [shape = circle, draw] {}; 
\node[vertex] (G-1) at (0.0, 1) [shape = circle, draw] {}; 
\node[vertex] (G-2) at (1.5, 1) [shape = circle, draw] {}; 
\draw[] (G-3) .. controls +(0, -1) and +(0, 1) .. (G--3); 
\draw[] (G--2) .. controls +(-0.5, 0.5) and +(0.5, 0.5) .. (G--1); 
\draw[] (G-1) .. controls +(0.5, -0.5) and +(-0.5, -0.5) .. (G-2); 
\end{tikzpicture} \right]_{\wasylozenge} $ 

 \ 

 \ 

\noindent $ \left[ \begin{tikzpicture}[scale = 0.5,thick, baseline={(0,-1ex/2)}] 
\tikzstyle{vertex} = [shape = circle, minimum size = 7pt, inner sep = 1pt] 
\node[vertex] (G--3) at (3.0, -1) [shape = circle, draw] {}; 
\node[vertex] (G--2) at (1.5, -1) [shape = circle, draw] {}; 
\node[vertex] (G--1) at (0.0, -1) [shape = circle, draw] {}; 
\node[vertex] (G-1) at (0.0, 1) [shape = circle, draw] {}; 
\node[vertex] (G-2) at (1.5, 1) [shape = circle, draw] {}; 
\node[vertex] (G-3) at (3.0, 1) [shape = circle, draw] {}; 
\draw[] (G-1) .. controls +(0.5, -0.5) and +(-0.5, -0.5) .. (G-2); 
\draw[] (G-2) .. controls +(0, -1) and +(0, 1) .. (G--2); 
\draw[] (G--2) .. controls +(-0.5, 0.5) and +(0.5, 0.5) .. (G--1); 
\draw[] (G--1) .. controls +(0, 1) and +(0, -1) .. (G-1); 
\end{tikzpicture} \right]_{\wasylozenge} $ 
 \ \ \  
 $ \left[ \begin{tikzpicture}[scale = 0.5,thick, baseline={(0,-1ex/2)}] 
\tikzstyle{vertex} = [shape = circle, minimum size = 7pt, inner sep = 1pt] 
\node[vertex] (G--3) at (3.0, -1) [shape = circle, draw] {}; 
\node[vertex] (G-1) at (0.0, 1) [shape = circle, draw] {}; 
\node[vertex] (G-2) at (1.5, 1) [shape = circle, draw] {}; 
\node[vertex] (G--2) at (1.5, -1) [shape = circle, draw] {}; 
\node[vertex] (G--1) at (0.0, -1) [shape = circle, draw] {}; 
\node[vertex] (G-3) at (3.0, 1) [shape = circle, draw] {}; 
\draw[] (G-1) .. controls +(0.5, -0.5) and +(-0.5, -0.5) .. (G-2); 
\draw[] (G-2) .. controls +(0.75, -1) and +(-0.75, 1) .. (G--3); 
\draw[] (G--3) .. controls +(-1, 1) and +(1, -1) .. (G-1); 
\draw[] (G--2) .. controls +(-0.5, 0.5) and +(0.5, 0.5) .. (G--1); 
\end{tikzpicture} \right]_{\wasylozenge} $ 
 \ \ \  
 $ \left[ \begin{tikzpicture}[scale = 0.5,thick, baseline={(0,-1ex/2)}] 
\tikzstyle{vertex} = [shape = circle, minimum size = 7pt, inner sep = 1pt] 
\node[vertex] (G--3) at (3.0, -1) [shape = circle, draw] {}; 
\node[vertex] (G--2) at (1.5, -1) [shape = circle, draw] {}; 
\node[vertex] (G--1) at (0.0, -1) [shape = circle, draw] {}; 
\node[vertex] (G-1) at (0.0, 1) [shape = circle, draw] {}; 
\node[vertex] (G-2) at (1.5, 1) [shape = circle, draw] {}; 
\node[vertex] (G-3) at (3.0, 1) [shape = circle, draw] {}; 
\draw[] (G--2) .. controls +(-0.5, 0.5) and +(0.5, 0.5) .. (G--1); 
\draw[] (G-1) .. controls +(0.5, -0.5) and +(-0.5, -0.5) .. (G-2); 
\end{tikzpicture} \right]_{\wasylozenge} $ 

 \ 

 \ 

\noindent $\lambda = (2, 1)$, $\mu = (1, 1, 1)$: \ 

\noindent $ \left[ \begin{tikzpicture}[scale = 0.5,thick, baseline={(0,-1ex/2)}] 
\tikzstyle{vertex} = [shape = circle, minimum size = 7pt, inner sep = 1pt] 
\node[vertex] (G--3) at (3.0, -1) [shape = circle, draw] {}; 
\node[vertex] (G-3) at (3.0, 1) [shape = circle, draw] {}; 
\node[vertex] (G--2) at (1.5, -1) [shape = circle, draw] {}; 
\node[vertex] (G--1) at (0.0, -1) [shape = circle, draw] {}; 
\node[vertex] (G-1) at (0.0, 1) [shape = circle, draw] {}; 
\node[vertex] (G-2) at (1.5, 1) [shape = circle, draw] {}; 
\draw[] (G-3) .. controls +(0, -1) and +(0, 1) .. (G--3); 
\draw[] (G-1) .. controls +(0.5, -0.5) and +(-0.5, -0.5) .. (G-2); 
\end{tikzpicture} \right]_{\wasylozenge} $ 
 \ \ \  
 $ \left[ \begin{tikzpicture}[scale = 0.5,thick, baseline={(0,-1ex/2)}] 
\tikzstyle{vertex} = [shape = circle, minimum size = 7pt, inner sep = 1pt] 
\node[vertex] (G--3) at (3.0, -1) [shape = circle, draw] {}; 
\node[vertex] (G--2) at (1.5, -1) [shape = circle, draw] {}; 
\node[vertex] (G--1) at (0.0, -1) [shape = circle, draw] {}; 
\node[vertex] (G-1) at (0.0, 1) [shape = circle, draw] {}; 
\node[vertex] (G-2) at (1.5, 1) [shape = circle, draw] {}; 
\node[vertex] (G-3) at (3.0, 1) [shape = circle, draw] {}; 
\draw[] (G-1) .. controls +(0.5, -0.5) and +(-0.5, -0.5) .. (G-2); 
\draw[] (G-2) .. controls +(-0.75, -1) and +(0.75, 1) .. (G--1); 
\draw[] (G--1) .. controls +(0, 1) and +(0, -1) .. (G-1); 
\end{tikzpicture} \right]_{\wasylozenge} $ 
 \ \ \  
 $ \left[ \begin{tikzpicture}[scale = 0.5,thick, baseline={(0,-1ex/2)}] 
\tikzstyle{vertex} = [shape = circle, minimum size = 7pt, inner sep = 1pt] 
\node[vertex] (G--3) at (3.0, -1) [shape = circle, draw] {}; 
\node[vertex] (G--2) at (1.5, -1) [shape = circle, draw] {}; 
\node[vertex] (G--1) at (0.0, -1) [shape = circle, draw] {}; 
\node[vertex] (G-1) at (0.0, 1) [shape = circle, draw] {}; 
\node[vertex] (G-2) at (1.5, 1) [shape = circle, draw] {}; 
\node[vertex] (G-3) at (3.0, 1) [shape = circle, draw] {}; 
\draw[] (G-1) .. controls +(0.5, -0.5) and +(-0.5, -0.5) .. (G-2); 
\end{tikzpicture} \right]_{\wasylozenge} $ 

 \ 

 \ 

\noindent $\lambda = (1, 1, 1)$, $\mu = (3)$:  \  

\noindent $ \left[ \begin{tikzpicture}[scale = 0.5,thick, baseline={(0,-1ex/2)}] 
\tikzstyle{vertex} = [shape = circle, minimum size = 7pt, inner sep = 1pt] 
\node[vertex] (G--3) at (3.0, -1) [shape = circle, draw] {}; 
\node[vertex] (G--2) at (1.5, -1) [shape = circle, draw] {}; 
\node[vertex] (G--1) at (0.0, -1) [shape = circle, draw] {}; 
\node[vertex] (G-1) at (0.0, 1) [shape = circle, draw] {}; 
\node[vertex] (G-2) at (1.5, 1) [shape = circle, draw] {}; 
\node[vertex] (G-3) at (3.0, 1) [shape = circle, draw] {}; 
\draw[] (G-1) .. controls +(1, -1) and +(-1, 1) .. (G--3); 
\draw[] (G--3) .. controls +(-0.5, 0.5) and +(0.5, 0.5) .. (G--2); 
\draw[] (G--2) .. controls +(-0.5, 0.5) and +(0.5, 0.5) .. (G--1); 
\draw[] (G--1) .. controls +(0, 1) and +(0, -1) .. (G-1); 
\end{tikzpicture} \right]_{\wasylozenge} $ 
 \ \ \  
 $ \left[ \begin{tikzpicture}[scale = 0.5,thick, baseline={(0,-1ex/2)}] 
\tikzstyle{vertex} = [shape = circle, minimum size = 7pt, inner sep = 1pt] 
\node[vertex] (G--3) at (3.0, -1) [shape = circle, draw] {}; 
\node[vertex] (G--2) at (1.5, -1) [shape = circle, draw] {}; 
\node[vertex] (G--1) at (0.0, -1) [shape = circle, draw] {}; 
\node[vertex] (G-1) at (0.0, 1) [shape = circle, draw] {}; 
\node[vertex] (G-2) at (1.5, 1) [shape = circle, draw] {}; 
\node[vertex] (G-3) at (3.0, 1) [shape = circle, draw] {}; 
\draw[] (G--3) .. controls +(-0.5, 0.5) and +(0.5, 0.5) .. (G--2); 
\draw[] (G--2) .. controls +(-0.5, 0.5) and +(0.5, 0.5) .. (G--1); 
\end{tikzpicture} \right]_{\wasylozenge} $ 

 \ 

 \ 

\noindent $\lambda = (1, 1, 1)$, $\mu = (2, 1)$:  \ 

\noindent $ \left[ \begin{tikzpicture}[scale = 0.5,thick, baseline={(0,-1ex/2)}] 
\tikzstyle{vertex} = [shape = circle, minimum size = 7pt, inner sep = 1pt] 
\node[vertex] (G--3) at (3.0, -1) [shape = circle, draw] {}; 
\node[vertex] (G-3) at (3.0, 1) [shape = circle, draw] {}; 
\node[vertex] (G--2) at (1.5, -1) [shape = circle, draw] {}; 
\node[vertex] (G--1) at (0.0, -1) [shape = circle, draw] {}; 
\node[vertex] (G-1) at (0.0, 1) [shape = circle, draw] {}; 
\node[vertex] (G-2) at (1.5, 1) [shape = circle, draw] {}; 
\draw[] (G-3) .. controls +(0, -1) and +(0, 1) .. (G--3); 
\draw[] (G--2) .. controls +(-0.5, 0.5) and +(0.5, 0.5) .. (G--1); 
\end{tikzpicture} \right]_{\wasylozenge} $ 
 \ \ \  
 $ \left[ \begin{tikzpicture}[scale = 0.5,thick, baseline={(0,-1ex/2)}] 
\tikzstyle{vertex} = [shape = circle, minimum size = 7pt, inner sep = 1pt] 
\node[vertex] (G--3) at (3.0, -1) [shape = circle, draw] {}; 
\node[vertex] (G--2) at (1.5, -1) [shape = circle, draw] {}; 
\node[vertex] (G--1) at (0.0, -1) [shape = circle, draw] {}; 
\node[vertex] (G-1) at (0.0, 1) [shape = circle, draw] {}; 
\node[vertex] (G-2) at (1.5, 1) [shape = circle, draw] {}; 
\node[vertex] (G-3) at (3.0, 1) [shape = circle, draw] {}; 
\draw[] (G-1) .. controls +(0.75, -1) and +(-0.75, 1) .. (G--2); 
\draw[] (G--2) .. controls +(-0.5, 0.5) and +(0.5, 0.5) .. (G--1); 
\draw[] (G--1) .. controls +(0, 1) and +(0, -1) .. (G-1); 
\end{tikzpicture} \right]_{\wasylozenge} $ 
 \ \ \  
 $ \left[ \begin{tikzpicture}[scale = 0.5,thick, baseline={(0,-1ex/2)}] 
\tikzstyle{vertex} = [shape = circle, minimum size = 7pt, inner sep = 1pt] 
\node[vertex] (G--3) at (3.0, -1) [shape = circle, draw] {}; 
\node[vertex] (G--2) at (1.5, -1) [shape = circle, draw] {}; 
\node[vertex] (G--1) at (0.0, -1) [shape = circle, draw] {}; 
\node[vertex] (G-1) at (0.0, 1) [shape = circle, draw] {}; 
\node[vertex] (G-2) at (1.5, 1) [shape = circle, draw] {}; 
\node[vertex] (G-3) at (3.0, 1) [shape = circle, draw] {}; 
\draw[] (G--2) .. controls +(-0.5, 0.5) and +(0.5, 0.5) .. (G--1); 
\end{tikzpicture} \right]_{\wasylozenge} $

 \ 

 \ 

\noindent $\lambda = (1, 1, 1)$, $\mu = (1, 1, 1)$: \ 

\noindent $ \left[ \begin{tikzpicture}[scale = 0.5,thick, baseline={(0,-1ex/2)}] 
\tikzstyle{vertex} = [shape = circle, minimum size = 7pt, inner sep = 1pt] 
\node[vertex] (G--3) at (3.0, -1) [shape = circle, draw] {}; 
\node[vertex] (G-3) at (3.0, 1) [shape = circle, draw] {}; 
\node[vertex] (G--2) at (1.5, -1) [shape = circle, draw] {}; 
\node[vertex] (G-2) at (1.5, 1) [shape = circle, draw] {}; 
\node[vertex] (G--1) at (0.0, -1) [shape = circle, draw] {}; 
\node[vertex] (G-1) at (0.0, 1) [shape = circle, draw] {}; 
\draw[] (G-3) .. controls +(0, -1) and +(0, 1) .. (G--3); 
\draw[] (G-2) .. controls +(0, -1) and +(0, 1) .. (G--2); 
\draw[] (G-1) .. controls +(0, -1) and +(0, 1) .. (G--1); 
\end{tikzpicture} \right]_{\wasylozenge} $ 
 \ \ \  
 $ \left[ \begin{tikzpicture}[scale = 0.5,thick, baseline={(0,-1ex/2)}] 
\tikzstyle{vertex} = [shape = circle, minimum size = 7pt, inner sep = 1pt] 
\node[vertex] (G--3) at (3.0, -1) [shape = circle, draw] {}; 
\node[vertex] (G--2) at (1.5, -1) [shape = circle, draw] {}; 
\node[vertex] (G-2) at (1.5, 1) [shape = circle, draw] {}; 
\node[vertex] (G--1) at (0.0, -1) [shape = circle, draw] {}; 
\node[vertex] (G-1) at (0.0, 1) [shape = circle, draw] {}; 
\node[vertex] (G-3) at (3.0, 1) [shape = circle, draw] {}; 
\draw[] (G-2) .. controls +(0, -1) and +(0, 1) .. (G--2); 
\draw[] (G-1) .. controls +(0, -1) and +(0, 1) .. (G--1); 
\end{tikzpicture} \right]_{\wasylozenge} $
 \ \ \  
 $ \left[ \begin{tikzpicture}[scale = 0.5,thick, baseline={(0,-1ex/2)}] 
\tikzstyle{vertex} = [shape = circle, minimum size = 7pt, inner sep = 1pt] 
\node[vertex] (G--3) at (3.0, -1) [shape = circle, draw] {}; 
\node[vertex] (G--2) at (1.5, -1) [shape = circle, draw] {}; 
\node[vertex] (G--1) at (0.0, -1) [shape = circle, draw] {}; 
\node[vertex] (G-1) at (0.0, 1) [shape = circle, draw] {}; 
\node[vertex] (G-2) at (1.5, 1) [shape = circle, draw] {}; 
\node[vertex] (G-3) at (3.0, 1) [shape = circle, draw] {}; 
\draw[] (G-1) .. controls +(0, -1) and +(0, 1) .. (G--1); 
\end{tikzpicture} \right]_{\wasylozenge} $
 \ \ \  
 $ \left[ \begin{tikzpicture}[scale = 0.5,thick, baseline={(0,-1ex/2)}] 
\tikzstyle{vertex} = [shape = circle, minimum size = 7pt, inner sep = 1pt] 
\node[vertex] (G--3) at (3.0, -1) [shape = circle, draw] {}; 
\node[vertex] (G--2) at (1.5, -1) [shape = circle, draw] {}; 
\node[vertex] (G--1) at (0.0, -1) [shape = circle, draw] {}; 
\node[vertex] (G-1) at (0.0, 1) [shape = circle, draw] {}; 
\node[vertex] (G-2) at (1.5, 1) [shape = circle, draw] {}; 
\node[vertex] (G-3) at (3.0, 1) [shape = circle, draw] {}; 
\end{tikzpicture} \right]_{\wasylozenge} $ 
\end{example}

 An advantage of Theorem \ref{dimensiontheorem} is given by how it provides a computationally practical and efficient way of computing the dimensions 
 for centralizer algebras of the form $\text{End}_{S_{n}}\big( V^{\boxtimes k} \big)$. Letting $2k \leq n$, it may be verified that 
\begin{equation}\label{notoeis}
 \left( \text{dim}\left( \text{End}_{S_{n}}\big( V^{\boxtimes k} \big) \right) : k \in \mathbb{N} 
 \right) = (2, 9, 29, 94, 275, 768, 2055, \ldots ), 
\end{equation}
 noting that the $\text{End}_{S_{n}}\big( V^{\boxtimes 2} \big) = 9$ case is illustrated in 
 Example \ref{dimension9example}. 
 The integer sequence in \eqref{notoeis}
 is not currently included in the On-line Encyclopedia of Integer Sequences, 
 and this suggests that our centralizer algebra construction is new. 

\section{Conclusion} 
 From our construction of the basis for $\text{End}_{S_{n}}\big( V^{\boxtimes k} \big)$ highlighted in Theorem \ref{Cbasis}, we may devise an 
 analogue of the orbit basis multiplication rule for $\text{End}_{S_{n}}\big( V^{\otimes k} \big)$, following a similar approach as in the work of 
 Benkart and Halverson \cite{BenkartHalverson2019}. This, in turn, can be used to obtain a diagram-like basis for $\text{End}_{S_{n}}\big( V^{\boxtimes k} \big)$. 
 This motivates a full exploration of the representation theory of $\text{End}_{S_{n}}\big( V^{\boxtimes k} \big)$ 
 and its relation to the representation theory for partition algebras. 

\subsection*{Acknowledgements}
 The author was supported through a Killam Postdoctoral Fellowship from the Killam Trusts.

\bibliographystyle{plain}
\bibliography{seprefe}

\begin{thebibliography}{10}

\bibitem{Andrews1998}
George~E. Andrews.
\newblock {\em The theory of partitions}.
\newblock Cambridge Mathematical Library. Cambridge University Press,
  Cambridge, 1998.
\newblock Reprint of the 1976 original.

\bibitem{BenkartHalverson2019}
Georgia Benkart and Tom Halverson.
\newblock Partition algebras and the invariant theory of the symmetric group.
\newblock In {\em Recent trends in algebraic combinatorics}, volume~16 of {\em
  Assoc. Women Math. Ser.}, pages 1--41. Springer, Cham, 2019.

\bibitem{BeyeneBackelinMantaciFufa2023}
Fufa Beyene, J\"{o}rgen Backelin, Roberto Mantaci, and Samuel~A. Fufa.
\newblock Set partitions and other {B}ell number enumerated objects.
\newblock {\em J. Integer Seq.}, 26(1):Art. 23.1.8, 27, 2023.

\bibitem{Bloss2005}
Matthew Bloss.
\newblock The partition algebra as a centralizer algebra of the alternating
  group.
\newblock {\em Comm. Algebra}, 33(7):2219--2229, 2005.

\bibitem{Cruz2019}
Tiago Cruz.
\newblock Schur-{W}eyl duality over commutative rings.
\newblock {\em Comm. Algebra}, 47(4):1619--1628, 2019.

\bibitem{DipperDotyHu2008}
Richard Dipper, Stephen Doty, and Jun Hu.
\newblock Brauer algebras, symplectic schur algebras and schur-weyl duality.
\newblock {\em Trans. Amer. Math. Soc.}, 360(1):189--213, 2008.

\bibitem{GelfandKrobLascouxLeclercRetakhThibon1995}
Israel~M. Gelfand, Daniel Krob, Alain Lascoux, Bernard Leclerc, Vladimir~S.
  Retakh, and Jean-Yves Thibon.
\newblock Noncommutative symmetric functions.
\newblock {\em Adv. Math.}, 112(2):218--348, 1995.

\bibitem{HalversonRam2005}
Tom Halverson and Arun Ram.
\newblock Partition algebras.
\newblock {\em European J. Combin.}, 26(6):869--921, 2005.

\bibitem{Jones1994}
V.~F.~R. Jones.
\newblock The {P}otts model and the symmetric group.
\newblock In {\em Subfactors ({K}yuzeso, 1993)}, pages 259--267. World Sci.
  Publ., River Edge, NJ, 1994.

\bibitem{Macdonald1995}
I.~G. Macdonald.
\newblock {\em Symmetric functions and {H}all polynomials}.
\newblock Oxford Mathematical Monographs. The Clarendon Press, Oxford
  University Press, New York, second edition, 1995.
\newblock With contributions by A. Zelevinsky, Oxford Science Publications.

\bibitem{Sagan2001}
Bruce~E. Sagan.
\newblock {\em The symmetric group}, volume 203 of {\em Graduate Texts in
  Mathematics}.
\newblock Springer-Verlag, New York, second edition, 2001.
\newblock Representations, combinatorial algorithms, and symmetric functions.

\bibitem{Schur1927}
I.~Schur.
\newblock {\"U}ber die rationalen {Darstellungen} der allgemeinen linearen
  {Gruppe}.
\newblock {\em Sitzungsber. Preu{{\ss}}. Akad. Wiss., Phys.-Math. Kl.},
  1927:58--75, 1927.

\bibitem{Weyl1939}
Hermann Weyl.
\newblock {\em The {C}lassical {G}roups. {T}heir {I}nvariants and
  {R}epresentations}.
\newblock Princeton University Press, Princeton, NJ, 1939.

\end{thebibliography}

\end{document}